\newtheorem{thm}{Theorem}[section]
\newtheorem{theo}[thm]{Theorem}
\newtheorem{prop}[thm]{Proposition} 
\newtheorem{lem}[thm]{Lemma} 
\newtheorem{lemma}[thm]{Lemma}
  \def\command@factory#1{%
    \expandafter\def\csname cal#1\endcsname{\mathcal{#1}}
    \expandafter\def\csname frak#1\endcsname{\mathfrak{#1}}
    \expandafter\def\csname scr#1\endcsname{\mathscr{#1}}
    \expandafter\def\csname bb#1\endcsname{\mathbb{#1}}
    \expandafter\def\csname rm#1\endcsname{\mathrm{#1}}
  }
\newcommand{\Fix} {\operatorname{Fix}}
\newcommand{\diam}{\mathop{\mathrm{diam}\;}}
\title{Property $P_{naive}$ for acylindrically hyperbolic groups}
\author{Carolyn R. Abbott, Fran\c{c}ois Dahmani} 
\date{}
\begin{document}

\maketitle

\begin{abstract} We prove that every acylindrically hyperbolic group that has
no non-trivial finite normal subgroup satisfies a strong ping pong
property, the $P_{naive}$ property: for any finite collection of elements
$h_1, \dots, h_k$, there exists another element $\gamma\neq 1$ such that for
all $i$, $\langle h_i, \gamma
\rangle = \langle h_i \rangle* \langle \gamma \rangle$.  We also
show that if a collection of subgroups $H_1, \dots, H_k$  is a
hyperbolically embedded collection, then  there is  $\gamma \neq 1$ such that for
all $i$, $\langle H_i, \gamma
\rangle = H_i * \langle \gamma \rangle$. \end{abstract}

The Ping-Pong lemma and its many variations are iconic arguments in group theory that are particularly useful when dealing with groups acting on hyperbolic spaces.  They allow one to produce free subgroups at
will in many groups.  

One of the  strongest ping-pong properties of a group is property $P_{naive}$,
defined in \cite{BCdlH}. $P_{naive}$ stands for ``naively permissive:" a group satisfying it {\it 
``(...) is so free that, for any finite subset $F$ of $G\setminus\{1\}$ there
exists a partner $\gamma_0$ of infinite order in $G$ such that each
pair $\{x, \gamma_0\}$ with $x\in F$ generates a subgroup which is as
free as possible." (\cite{BCdlH})}. Here, ``as free as possible'' means
that the subgroup generated is canonically isomorphic to $\langle x\rangle_G *
\langle \gamma_0\rangle_G \simeq \langle x\rangle_G *\mathbb{Z}  $. 

It is known that Zariski-dense discrete subgroups of connected simple groups of
rank $1$ with trivial center have property $P_{naive}$ \cite[Thm 3]{BCdlH}, as well as hyperbolic groups
and relatively hyperbolic groups without non-trivial finite normal
subgroups \cite{AM} (see also \cite{dlH} for torsion-free hyperbolic
groups), and
non-Euclidean cubical $\operatorname{CAT}(0)$ groups without non-trivial finite normal subgroups \cite{KS}.

The action of a group $G$ on a metric space $X$ is \emph{acylindrical}
if for all $\epsilon>0$ there exist constants $M,N\geq 0$ such that
for all $x,y\in X$ with $d(x,y)\geq M$, the number of elements $g\in
G$ satisfying $d(x,gx)\leq \epsilon$ and $d(y,gy)\leq \epsilon$ is at
most $N$.  One says that a group is {\it acylindrically hyperbolic} if
it admits a non-elementary, acylindrical, cobounded action
by isometries on a hyperbolic space.  The class of acylindrical hyperbolic groups, whose systematic study
was initiated 
 by Osin \cite{O}
after  studies in \cite{DGO, H, BBF}, has attracted much interest due to its
many examples.

 In this note, we show that all acylindrically hyperbolic groups
 without non-trivial finite normal subgroups indeed  have property
 $P_{naive}$ (Theorem \ref{theo;main}). This includes hyperbolic groups, relatively hyperbolic groups, non-virtually cyclic $CAT(0)$ groups with a rank one isometry, non-exceptional mapping class groups, $\operatorname{Out}(F_n)$ for $n\geq 2$, certain
 models of infinitely presented random groups, and many other
 examples.  In particular, we recover and greatly extend one of the main
 results  of  \cite{AM} and  \cite{KS}.

We actually prove a stronger statement (Theorem \ref{theo;pingpong_with_elliptic}), of which property $P_{naive}$ is a consequence: 

\begin{thm} \label{main1}
Given a group $G$ without non-trivial finite normal subgroups that admits a non-elementary cobounded  acylindrical action by isometries on a hyperbolic space $X$ and a collection of subgroups $H_1,\dots,H_n$ which are elliptic in $X$, there exists an element $\gamma\in G$ such that $\langle \gamma ,H_i\rangle \simeq \langle \gamma\rangle *H_i$.
\end{thm}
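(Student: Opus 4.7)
My strategy is to take $\gamma$ to be a high power of a carefully chosen conjugate of a loxodromic element, and to verify the free product conclusion by a Klein-style ping-pong argument directly on $X$. Fix a basepoint $o \in X$ and set $D = \max_i \diam (H_i \cdot o)$, which is finite because each $H_i$ is elliptic. Non-elementarity and acylindricity of the action yield a loxodromic WPD element $g \in G$ with translation length $\tau(g) > 0$ and a quasi-axis $\ell_g$.

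\textbf{Choice of $\gamma$ and ping-pong.}
Using coboundedness, I would produce a conjugate $\gamma_0 = t g t^{-1}$ whose quasi-axis $\ell$ passes at some large distance $R$ from $o$ and from each orbit $H_i \cdot o$, and then set $\gamma = \gamma_0^N$ for $N$ chosen large enough that $N \tau(g)$ dominates $D$, $R$, and the hyperbolicity constant $\delta$. Fixing one index $i$, a reduced word in $\langle \gamma \rangle * H_i$ has the form $w = \gamma^{n_1} h_1 \gamma^{n_2} h_2 \cdots \gamma^{n_k} h_k$ with $n_j \neq 0$ and the $h_j \in H_i$ nontrivial (except possibly at the ends). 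I would track $w \cdot o$ along partial products: each $h_j$ perturbs its input by at most $D$, while each $\gamma^{n_j}$ shifts along a translate of $\ell$ by at least $|n_j| N \tau(g) \gg D + R + \delta$. Standard thin-triangle or Morse-type estimates then show that the successive prefixes trace a quasi-geodesic in $X$ drifting away from $o$, so $w \cdot o \neq o$ and hence $w \neq 1$. Combined with $H_i \cap \langle \gamma \rangle = \{1\}$ (immediate, since $\gamma$ is loxodromic and $H_i$ elliptic), this gives $\langle \gamma, H_i \rangle \simeq \langle \gamma \rangle * H_i$ for each $i$.

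\textbf{Main obstacle.}
The genuine difficulty is to rule out that some nontrivial $h \in H_i$ already lies in the maximal elementary subgroup $E(\gamma_0)$ of $\gamma_0$, since such an $h$ would conjugate $\gamma$ to a power of itself and make the first ping-pong move collapse. By acylindricity, $E(\gamma_0)$ is virtually cyclic, and the only elements that can cause trouble are those in its finite ``rotation'' subgroup $K$. This is exactly where the hypothesis that $G$ has no non-trivial finite normal subgroup enters: it gives enough flexibility in the choice of $N$ and of the conjugator $t$ to guarantee that $E(\gamma_0) \cap H_i = \{1\}$ for all $i$ simultaneously, since otherwise a bounded collection of conjugates of $K$ would meet each $H_i$ non-trivially and, together with their $G$-normal closure, would contradict the absence of finite normal subgroups. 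Carrying out this last step carefully --- producing a single loxodromic element whose elementary closure meets each $H_i$ trivially --- is the heart of the argument; once it is in hand, the ping-pong estimates above complete the proof.
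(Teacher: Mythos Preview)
Your overall architecture matches the paper's: locate a loxodromic element whose elementary closure misses every $H_i$, then run a local-to-global ping-pong along the broken path of partial prefixes. The ping-pong portion is essentially correct and corresponds to Proposition~\ref{pingpong} of the paper.

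The proof breaks down, however, exactly at the paragraph you flag as the main obstacle. Two concrete problems:

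\begin{itemize}
\item Placing the quasi-axis $\ell$ at large distance $R$ from $o$ does not prevent a nontrivial $h\in H_i$ from preserving $\{\gamma_0^+,\gamma_0^-\}$. A finite-order $h$ that swaps or fixes the endpoints of $\ell$ will have a quasi-fixed point \emph{on} $\ell$, regardless of where $o$ sits; acylindricity then only bounds the \emph{number} of such $h$, it does not force that number to be zero. So the parameter $R$ is doing no work toward $E(\gamma_0)\cap H_i=\{1\}$.
\item The normal-closure argument is not valid. The $G$--normal closure of the finite kernel $K\le E(g)$ is typically infinite (often all of $G$), so it cannot contradict the absence of finite normal subgroups. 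Nor is there any reason for only ``a bounded collection of conjugates of $K$'' to be relevant: as $t$ ranges over $G$ the subgroups $tKt^{-1}$ form an infinite family, and nothing in your setup pins down finitely many of them.
\end{itemize}

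The paper uses the hypothesis on finite normal subgroups in a different place and in a different way. It first shows that $\Fix_{50\delta}(H)\neq X$ for every nontrivial elliptic $H$: if equality held, $H$ would be finite with only finitely many conjugates, and the product of those conjugates inside a finite-index normal subgroup of $G$ would be a nontrivial finite normal subgroup. With this in hand, a boundary-dynamics construction (Proposition~\ref{gamma}) manufactures a loxodromic $\gamma$ whose limit points lie outside every $\overline{\Fix_{50\delta}(H_i)}$, and a further argument---showing that no nontrivial $h\in H_i$ can fix an open subset of $\partial X$---upgrades this to the statement that no nontrivial $h\in\bigcup_i H_i$ preserves $\{\gamma^+,\gamma^-\}$. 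That construction is the real content of the theorem, and it is what your sketch is missing.
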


 Recall that given a group $G$ acting by isometries on a hyperbolic space $X$, an
element $g\in G$ is called {\em elliptic} if the orbit $g\cdot s$ is
bounded for some (equivalently any) $s\in X$, and it is called {\em
  loxodromic } if the map $\mathbb Z\to X$ defined by $n\mapsto g^ns$
is a quasi-isometric embedding for some (equivalently any) $s\in X$. A
subgroup is called elliptic if all its elements are elliptic.

To prove this statement, we find a loxodromic element in $G$  whose axis
escapes all quasi-fixed-point sets for these subgroups.   We then show
that a sufficiently large power of this loxodromic element plays ping-pong with each of the given
subgroups.   Thus the proof of property $P_{naive}$ for acylindrically hyperbolic groups reduces to finding a hyperbolic space with an acylindrical action of $G$  in which all of
the given elements are elliptic.

It is known that if a group has Property $P_{naive}$ then its reduced $C^*$--algebra is simple. For acylindrically hyperbolic groups, this simplicity was shown in
\cite[Thm 2.35]{DGO}, but it was established through other means.   Thus Theorem \ref{theo;main} provides a new proof of $C^*$--simplicity for acylindrically hyperbolic groups without non-trivial finite normal subgroups.

As a consequence of Theorem \ref{main1}, we obtain the following strengthening of \cite[Thm 2.35]{DGO}.
  
  \begin{thm} Suppose $G$ is an acylindrically hyperbolic group.  Then the following are equivalent.
  \begin{enumerate}
  \item $G$ has no non-trivial finite normal subgroups.
  \item $G$ is ICC.
  \item $G$ is not inner amenable.
  \item $G$ has property $P_{naive}$.
  \end{enumerate}
  If, in addition, $G$ is countable, the above conditions are also equivalent to 
  \begin{enumerate}[resume]
  \item The reduced $C^*$--algebra of $G$ is simple.
  \item The reduced $C^*$--algebra of $G$ has a unique normalized trace.

  \end{enumerate}
  
  \end{thm}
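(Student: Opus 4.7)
The plan is to deduce all the equivalences from the new content of this paper --- Theorem~\ref{main1}, which yields Theorem~\ref{theo;main} giving property $P_{naive}$ --- together with structural facts about acylindrically hyperbolic groups and standard results from operator algebras. The central implication, $(1)\Rightarrow(4)$, is exactly Theorem~\ref{theo;main}; every other implication is either immediate or a direct citation.

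For $(4)\Rightarrow(1)$, I would argue by contradiction. Suppose $N\trianglelefteq G$ is nontrivial and finite, and pick $h\in N\setminus\{1\}$. Applying $P_{naive}$ to $\{h\}$ gives $\gamma$ of infinite order with $\langle h,\gamma\rangle\simeq\langle h\rangle*\langle\gamma\rangle$. Inside this free product the conjugates $\gamma^n h\gamma^{-n}$, $n\in\mathbb{Z}$, are pairwise distinct by normal form, so the normal closure of $h$ in $\langle h,\gamma\rangle$ is infinite; yet it is contained in $N$, which is finite --- a contradiction.

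The equivalences $(1)\Leftrightarrow(2)\Leftrightarrow(3)$ I would import from the structure theory of acylindrically hyperbolic groups. For $(1)\Leftrightarrow(2)$, the point is that in such a group the FC-center coincides with the maximal finite normal subgroup $K(G)$ (a consequence of the classification of elements and of the behavior of centralizers of loxodromics, as developed in \cite{DGO, O}), so $K(G)=\{1\}$ if and only if $G$ is ICC. For $(1)\Leftrightarrow(3)$, a nontrivial finite normal subgroup immediately yields an inner-invariant mean supported on it, giving $(3)\Rightarrow(1)$; the converse, that an acylindrically hyperbolic group with trivial finite radical is not inner amenable, is a known consequence of the existence of hyperbolically embedded free subgroups and a standard averaging argument.

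For the $C^*$-algebraic items, assuming $G$ is countable, $(4)\Rightarrow(5)$ is the classical Powers-type argument --- $P_{naive}$ was designed precisely so that the Akemann--Lee / de la Harpe proof of $C^*$-simplicity goes through --- so this step also recovers \cite[Thm~2.35]{DGO} via a new route. The implication $(5)\Rightarrow(6)$ is the Breuillard--Kalantar--Kennedy--Ozawa theorem that $C^*$-simplicity implies unique trace, and $(6)\Rightarrow(1)$ combines their equivalence of unique trace with triviality of the amenable radical together with the fact that in an acylindrically hyperbolic group the amenable radical coincides with $K(G)$. The only step I expect to be genuinely non-mechanical, once Theorem~\ref{main1} is in hand, is $(1)\Rightarrow(3)$: property $P_{naive}$ alone does not obviously preclude inner amenability, so one must invoke geometric input (hyperbolically embedded free subgroups and a conjugation-averaging trick) rather than just the output of this paper.
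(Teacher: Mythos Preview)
Your proposal is correct and follows essentially the same strategy the paper indicates: the paper does not give a standalone proof of this theorem but presents it as a strengthening of \cite[Thm~2.35]{DGO}, the new content being precisely $(1)\Rightarrow(4)$ via Theorem~\ref{theo;main}, with $(4)\Rightarrow(5)$ supplied by \cite[Lemmas~2.1 and 2.2]{BCdlH} (as the paper notes in the introduction and again in Section~4); the remaining equivalences among $(1),(2),(3),(5),(6)$ are taken wholesale from \cite[Thm~2.35]{DGO}. Your write-up is in fact more detailed than what the paper supplies: your direct elementary argument for $(4)\Rightarrow(1)$ is a nice addition (and, unlike routing through $(5)$, does not require countability), and your explicit invocation of \cite{BKKO} for $(5)\Rightarrow(6)\Rightarrow(1)$ unpacks what the paper leaves inside the DGO citation; your caveat that $(1)\Rightarrow(3)$ needs genuine geometric input beyond $P_{naive}$ is also accurate and worth flagging.
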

  
Recently, Kalantar and Kennedy have given a dynamical criterion for $C^*$--simplicity:  they show in \cite{KK} that a group $G$ is $C^*$--simple if and only if $G$ has a strongly proximal, topologically free, minimal action by homeomorphisms on a compact topological space (see Section 3 for precise definitions).    Our proof of Theorem \ref{main1} leads to the following result (Proposition \ref{prop:boundaryaction}), where $\partial X$ is the Gromov boundary of the hyperbolic space $X$.  
 
 \begin{prop} \label{main2}
Let $G$ be an acylindrically hyperbolic group without non-trivial finite normal subgroup, and let $X$ be a hyperbolic space with an isometric $G$-action which is acylindrical and cobounded.

The action of $G$ on   $\partial X$ by homeomorphisms is minimal, strongly proximal, and topologically free.
\end{prop}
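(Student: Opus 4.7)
The plan is to prove minimality, strong proximality, and topological freeness as three largely independent steps. The first two rely on two standard ingredients: the North--South dynamics of loxodromic elements acting on $\partial X$, and the density of pairs of loxodromic fixed points in $\partial X \times \partial X$ off the diagonal (both consequences of the non-elementary acylindrical setting). Topological freeness, by contrast, is where the genuine input from Theorem \ref{main1} enters.

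For minimality, given $\xi \in \partial X$ and a non-empty open $V \subset \partial X$, I pick a loxodromic $h \in G$ whose attracting fixed point $h^+$ lies in $V$ and whose repelling fixed point $h^-$ differs from $\xi$; the iterates $h^n\xi$ then converge to $h^+ \in V$, so $\overline{G\cdot\xi}$ meets $V$. For strong proximality, given $\mu \in \operatorname{Prob}(\partial X)$, I choose a loxodromic $h$ with $\mu(\{h^-\}) = 0$, available because $\mu$ has at most countably many atoms while loxodromic fixed points are dense. North--South dynamics then give $h^n_*\mu \to \delta_{h^+}$ in the weak-$*$ topology, producing a point mass in the closure of the $G$-orbit of $\mu$.

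For topological freeness, fix $g \in G$ with $g \neq 1$ and show $\Fix_{\partial X}(g)$ has empty interior. If $g$ is loxodromic its boundary fixed-point set has exactly two elements, which suffices. If $g$ is elliptic, argue by contradiction: suppose $g$ fixes a non-empty open $U \subset \partial X$ pointwise. Applying Theorem \ref{main1} to the elliptic subgroup $\langle g\rangle$ produces a loxodromic $\gamma$ with $\langle g, \gamma\rangle = \langle g\rangle * \langle \gamma\rangle$. The key extra point, which I would extract from the construction in the proof of Theorem \ref{main1}, is that one can additionally arrange $\gamma^+, \gamma^- \in U$: the argument there builds $\gamma$ from an auxiliary loxodromic whose axis one is free to choose, and by density of loxodromic fixed-point pairs one may start from a loxodromic with endpoints in $U$. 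With $\gamma^\pm \in U \subset \Fix(g)$, one has $g \in E^+(\gamma)$, so $\langle g, \gamma\rangle \subset E(\gamma)$ is virtually cyclic; but $\langle g\rangle * \langle \gamma\rangle$ contains a non-abelian free subgroup (since $g \neq 1$ and $\gamma$ has infinite order) and therefore cannot be virtually cyclic, a contradiction.

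The main obstacle is precisely this flexibility step: verifying that the proof of Theorem \ref{main1} produces a $\gamma$ whose endpoints can be prescribed to lie in an arbitrary non-empty open set. This is not part of the theorem's statement, but should be visible in its construction (where $\gamma$ is a large power of a loxodromic built by ping-pong from an arbitrary initial loxodromic, chosen to avoid certain quasi-fixed-point sets, so one is free to feed in a loxodromic with axis endpoints in $U$). Failing this, a backup route is to invoke the Dahmani--Guirardel--Osin characterization of the finite radical as $K(G) = \bigcap_{h \text{ loxodromic}} E(h)$: the same argument would then force $g$ into this intersection, which equals $\{1\}$ by hypothesis.
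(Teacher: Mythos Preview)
Your treatment of minimality and strong proximality is correct and slightly more direct than the paper's, which manufactures the needed loxodromics via Lemma~\ref{LOA} rather than simply invoking density of loxodromic endpoints. One small remark: strong proximality here means proximality of the $G$-action on $\operatorname{Prob}(\partial X)$, so you should run your argument with a single loxodromic $h$ whose repelling point avoids the atoms of \emph{both} given measures $\mu_1,\mu_2$; then the same sequence $h^n$ pushes both to $\delta_{h^+}$.

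The genuine gap is in topological freeness, exactly where you flag it. The two conditions you want on $\gamma$ are incompatible outright, not merely hard to arrange: if $\gamma^\pm\in U$ and $g$ fixes $U$ pointwise, then $g$ preserves $\{\gamma^+,\gamma^-\}$, so $\langle g,\gamma\rangle\subset E(\gamma)$ is virtually cyclic and can never equal $\langle g\rangle*\langle\gamma\rangle$. Thus no construction can produce such a $\gamma$, and the one behind Theorem~\ref{main1} certainly does not: its engine is Proposition~\ref{gamma}, whose very conclusion is that $\gamma$ may be chosen so that no non-trivial element of $\langle g\rangle$ preserves $\{\gamma^+,\gamma^-\}$, which forces at least one of $\gamma^\pm$ \emph{out} of the set fixed by $g$. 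Feeding in an initial loxodromic with endpoints in $U$ does not help, because the iterative procedure in Proposition~\ref{gamma} then moves the endpoints away from the quasi-fixed sets. Your backup via $K(G)=\bigcap_h E(h)$ does not close the gap either, since fixing an open $U$ only gives $g\in E(h)$ for loxodromics $h$ with both endpoints in $U$, not for all $h$.

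The paper runs Proposition~\ref{gamma} in the opposite direction. From minimality, pick a loxodromic $\gamma_0$ with a fixed point in $U$; from Proposition~\ref{gamma}, pick a loxodromic $\gamma_1$ with neither endpoint fixed by $g$. Lemma~\ref{LOA} then produces loxodromics $\xi_i$ with $\xi_i^+\to\gamma_0^+\in U$ and $\xi_i^-\to\gamma_1^-$. Eventually $\xi_i^-$ is not fixed by $g$, and Lemma~\ref{lem:axis} (either both endpoints of a loxodromic lie in $\overline{\Fix_{50\delta}(\langle g\rangle)}$ or neither does) forces $\xi_i^+$ out as well, contradicting $\xi_i^+\in U$.
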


However, in general $\partial X$ is not compact, and thus this does
not give another independent proof of $C^*$--simplicity.  In \cite{KS}, Kar and
Sageev prove that in the case of a cubical CAT(0) group, a compact
subset of the roller boundary is a topologically free G-boundary.
Proposition 0.3 is close to this, which possibly suggests that in some
cases, compactness is not crucial, and that a suitable bordification
is enough.

We now discuss another consequence of property $P_{naive}$.  A ring $R$ is \emph{(right) primitive} if it has a faithful irreducible (right) $R$--module. For many years, it was not known whether there exists a group $G$ and a ring $R$ such that the group ring $RG$ is primitive, and it was conjectured that no such groups exist.  However, there are actually many groups for which this is true; Formanek and Snider give the first example of such a group in \cite {FS}.  Formanek showed in \cite{Form} that under mild hypotheses, the group algebra of the free product of groups over any field  is primitive, and Balogun later generalized this result to amalgamated products in \cite{Bal}.  Recently, Solie \cite{Sol} showed that the group ring of a non-elementary torsion-free hyperbolic group over any countable domain is primitive.  Using Theorem \ref{main1}, we can generalize these results to acylindrically hyperbolic groups.  We note that this result is new even for hyperbolic groups with torsion.

\begin{thm}
Let $G$ be a countable acylindrically hyperbolic group with no non-trivial finite normal subgroups.  For any countable domain $R$, the group ring $RG$ is primitive.
\end{thm}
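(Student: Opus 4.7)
The plan is to adapt the strategy of Solie \cite{Sol}, who proved primitivity of the group ring of a countable non-elementary torsion-free hyperbolic group over any countable domain using property $P_{naive}$ as the essential group-theoretic input. Since Theorem \ref{main1} supplies $P_{naive}$ for our $G$, essentially the same scheme should apply once the presence of torsion is handled.

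I would use the standard characterization: $RG$ is right primitive if and only if there is a maximal right ideal $M\subset RG$ containing no non-zero two-sided ideal, in which case $RG/M$ is a faithful simple right module. Since $G$ and $R$ are countable, enumerate $RG\setminus\{0\}=\{\beta_1,\beta_2,\ldots\}$, and build an increasing chain of right ideals $J_0\subset J_1\subset\cdots$ with the guarantee that the two-sided ideal of $RG$ generated by $\beta_n$ is not absorbed into $J_n$. Any maximal right ideal extending $\bigcup_n J_n$ (obtained via Zorn) then serves as $M$. To perform the inductive step, at stage $n$ I would apply Theorem \ref{main1} to produce $\gamma_n\in G$ of infinite order such that $\langle h,\gamma_n\rangle=\langle h\rangle*\langle\gamma_n\rangle$ for each $h$ in the support of $\beta_n$ and of the data accumulated through stage $n-1$; taking $\gamma_n$ as a sufficiently high power of a loxodromic element also permits avoiding any prescribed finite list of forbidden elements. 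Working inside $\langle h,\gamma_n\rangle$, the Formanek--Balogun free-product technique \cite{Form,Bal} then exhibits an element of the two-sided ideal generated by $\beta_n$ that lies outside $J_n$; adjoin it when forming $J_{n+1}$.

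The main obstacle is that Solie's normal-form manipulations are carried out in free groups of rank two (with $\langle h\rangle\cong\mathbb{Z}$), whereas here $\langle h\rangle$ may be a non-trivial finite cyclic group and the relevant subgroup is a free product $\mathbb{Z}/m*\mathbb{Z}$ rather than $F_2$. The Formanek--Balogun theory covers free products of arbitrary non-trivial countable groups, excluding only the infinite dihedral case $\mathbb{Z}/2*\mathbb{Z}/2$, which never arises here since $\langle\gamma_n\rangle$ is always infinite cyclic. The technical work is then to verify that each step of Solie's induction transports to this broader setting, using free-product normal forms in place of free-group ones; this is precisely what is genuinely new for hyperbolic groups with torsion.
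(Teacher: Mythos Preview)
Your plan diverges from both the paper and from Solie. Solie does not build the faithful simple module by hand; he verifies the Alexander--Nishinaka criterion: a group $G$ with a non-abelian free subgroup of the same cardinality satisfying property $(*)$ has $RG$ primitive over any domain $R$ with $|R|\le |G|$. The paper follows the same route: given a finite set $M\subset G\setminus\{1\}$, use $P_{naive}$ to get a loxodromic $u$ with $\langle g,u\rangle=\langle g\rangle*\langle u\rangle$ for each $g\in M$, set $a=u^N$, $b=u^{2N}$, $c=u^{3N}$, and then show that any relation $(x_1^{-1}g_1x_1)\cdots(x_m^{-1}g_mx_m)=1$ forces $x_i=x_{i+1}$ for some $i$.

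There are two genuine problems with your sketch. First, the inductive step is stated in the wrong direction. Finding an element of the two-sided ideal $(\beta_n)$ that lies \emph{outside} $J_n$ and then adjoining it to $J_{n+1}$ does nothing to prevent $(\beta_n)\subseteq M$; it only guarantees $(\beta_n)\cap M\neq 0$. What one needs is $1\in J_n+(\beta_n)$, i.e.\ to find $\alpha_n\in(\beta_n)$ such that $J_{n-1}+(1-\alpha_n)RG$ remains proper, so that no maximal right ideal containing $\bigcup J_n$ can swallow $(\beta_n)$.

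Second, and more seriously, you have misidentified the obstacle. Torsion is not the issue; the issue is \emph{mixed products}. Property $P_{naive}$ gives only the pairwise statements $\langle g_i,\gamma\rangle\cong\langle g_i\rangle*\langle\gamma\rangle$, one $g_i$ at a time. It says nothing about an alternating word $\gamma^{n_0}g_1\gamma^{n_1}g_2\cdots g_k\gamma^{n_k}$ in which the $g_i$ are \emph{different} elements of $M$; such a word lies in no single $\langle g_i,\gamma\rangle$, so no free-product normal form in $\langle h\rangle*\langle\gamma\rangle$ applies. Both your direct construction (where $\beta_n$ has several support elements and $J_{n-1}$ records earlier ones) and the Alexander--Nishinaka property $(*)$ require exactly this control. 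The paper supplies it with a separate geometric lemma: since the $\gamma$ produced by $P_{naive}$ is generalized loxodromic, $E(\gamma)$ is hyperbolically embedded, and an isolated-component argument in the relative Cayley graph shows that for $|n_i|$ large the mixed word above is non-trivial whenever every $g_i\notin E(\gamma)$. This lemma, not any adjustment for torsion, is the missing ingredient; once it is in hand, property $(*)$ follows immediately and Alexander--Nishinaka does all the ring theory for you.
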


\subsection*{Acknowledgments}

The authors thank M. Sageev for asking the question and encouraging us to write
this note. The authors also thank D. Osin for pointing out the application to group rings, as well as the anonymous referee for useful comments.
C.A. is partially supported by NSF RTG award DMS-1502553. 
F.D. is supported by the Institut Universitaire de France. 
The authors would like to thank the Mathematical Sciences Research Institute for hosting
them during the completion of this project.

\section{Axes of loxodromic elements}

In this section we collect some useful facts regarding the axes of
loxodromic elements in acylindrical actions on hyperbolic spaces. 

Given an isometry $\phi$ of a space $X$, we say that a point is
\emph{$r$--quasi-minimal for $\phi$} if $d(x,\phi(x))\leq \inf_{y\in X} d(y, \phi(y)) +r$.  The set consisting of all $r$--quasi-minimal points for $\phi$ is an \emph{$r$--quasi-axis} for $\phi$, which we denote $A_r(\phi)$.  If the constant $r$ is unimportant or clear from  context, we may  call $A_r(\phi)$ simply a \emph{quasi-axis} for $\phi$.  If $\phi$ is a loxodromic isometry on a hyperbolic space, then $\phi$ fixes two points $\phi^+$ and $\phi^-$ on the boundary, where $\phi^+$ is the attracting fixed point of $\phi$.


Let $G$ be a acylindrically hyperbolic group 
and consider an acylindrical action of $G$ on a $\delta$--hyperbolic geodesic space $X$.  For a subgroup $H$ of $G$, we denote by $\Fix_K(H)$ the set $\{ x\in X\mid d(x,hx)\leq
K \textrm{ for all } h\in H\}$.  Given $g,h\in G$, we use the conjugation notation $g^h=hgh^{-1}$.

\begin{lemma} \label{lem:axis}  Let $G$ act acylindrically on a $\delta$--hyperbolic geodesic metric space $(X,d)$, and suppose $\gamma\in G$ is loxodromic for this action.  

 If $\gamma^-\in \overline{\Fix_{50\delta}(H)}$ for some elliptic subgroup $H\subset G$, then $\gamma^+\in\overline{\Fix_{50\delta}(H)}$ as well.   
 Similarly, if $\gamma^+\in\overline{\Fix_{50\delta}(H)}$, then $\gamma^-\in \overline{\Fix_{50\delta}(H)}.$
\end{lemma}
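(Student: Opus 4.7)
The plan is to bootstrap the hypothesis on $\gamma^-$ in three steps: showing that every $h\in H$ fixes $\gamma^-$ on $\partial X$, then that $h$ also fixes $\gamma^+$, and finally producing an explicit sequence in $\Fix_{50\delta}(H)$ converging to $\gamma^+$ by iterating a suitable power of $\gamma$.

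For the first step, pick a sequence $(x_n)$ in $\Fix_{50\delta}(H)$ with $x_n\to \gamma^-$. For each $h\in H$ the sequence $(hx_n)$ stays within $50\delta$ of $(x_n)$, so in the $\delta$--hyperbolic space $X$ it has the same Gromov boundary limit; hence $hx_n\to \gamma^-$. Continuity of the boundary extension of $h$ forces $h\gamma^- = \gamma^-$.

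For the second step, the conjugate $h\gamma h^{-1}$ is a loxodromic isometry with fixed-point pair $\{h\gamma^+, \gamma^-\}$, so $\gamma$ and $h\gamma h^{-1}$ are two loxodromic elements sharing the boundary point $\gamma^-$. A standard consequence of acylindricity is that two loxodromic isometries sharing one fixed point at infinity must in fact share both (otherwise the group they generate would have a unique boundary fixed point of parabolic type, which acylindricity forbids). Hence $\{h\gamma^+,\gamma^-\} = \{\gamma^+,\gamma^-\}$, and since $h$ is injective on $\partial X$ and already fixes $\gamma^-$, we conclude $h\gamma^+ = \gamma^+$. Thus $H$ lies in the pointwise stabilizer $E^+(\gamma) := \operatorname{Stab}_G(\gamma^+)\cap \operatorname{Stab}_G(\gamma^-)$.

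For the third step, I use that under acylindricity $E^+(\gamma)$ is virtually cyclic, fitting in an exact sequence $1\to F\to E^+(\gamma)\to \mathbb{Z}\to 1$ with $F$ a finite normal subgroup and the $\mathbb{Z}$--quotient generated by the image of a power of $\gamma$. Since $H$ is elliptic, its elements are torsion in $E^+(\gamma)$ and hence lie in $F$. Conjugation by $\gamma$ restricts to an automorphism of the finite group $F$, so some power $\gamma^{k}$ satisfies $\gamma^{k}H\gamma^{-k}=H$, which forces $\gamma^{k}\Fix_{50\delta}(H) = \Fix_{50\delta}(H)$. Picking any $x_0\in \Fix_{50\delta}(H)$, which is non-empty by hypothesis, the iterates $\gamma^{km}x_0$ lie in $\Fix_{50\delta}(H)$ and converge to $\gamma^+$, so $\gamma^+\in \overline{\Fix_{50\delta}(H)}$. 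The converse implication follows by replacing $\gamma$ with $\gamma^{-1}$, which swaps $\gamma^+$ and $\gamma^-$.

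The main obstacle is the step that invokes the dichotomy for pairs of loxodromics: two loxodromic elements in an acylindrical action sharing a single fixed point at infinity must share both. This is the place where acylindricity (as opposed to mere hyperbolicity) enters essentially; the rest of the argument reduces to standard structural results about the elementary closure $E^+(\gamma)$ and simple continuity on $X\cup \partial X$.
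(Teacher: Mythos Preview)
Your proof is correct and reaches the same endpoint as the paper's---finding a power of $\gamma$ that normalizes $H$ and then iterating on a point of $\Fix_{50\delta}(H)$---but the route is genuinely different. The paper works geometrically from the start: it first observes that $H$ must be finite, then uses quasi-convexity of $\Fix_{50\delta}(H)$ to produce an entire quasi-ray inside it asymptotic to $\gamma^-$, and applies acylindricity directly to the conjugates $\gamma^N h \gamma^{-N}$ along that ray to force each $h$ to commute with some power $\gamma^{N_0}$; only then does it pass to the boundary to conclude $h\gamma^+=\gamma^+$. You instead argue on the boundary from the outset: once $h\gamma^-=\gamma^-$, you invoke the dichotomy that two loxodromics in an acylindrical action sharing one endpoint must share both, and then use the virtually cyclic structure of $E^+(\gamma)$ to place $H$ inside the finite kernel $F$ and extract a normalizing power of $\gamma$. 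Your approach is conceptually cleaner and sidesteps the quasi-convexity discussion, at the price of importing two black-box structural facts about acylindrical actions (the no-common-single-endpoint lemma and the short exact sequence $1\to F\to E^+(\gamma)\to\mathbb{Z}\to 1$); the paper's argument is more self-contained but more hands-on with the geometry. Your final step---exhibiting the explicit sequence $\gamma^{km}x_0\in\Fix_{50\delta}(H)$ converging to $\gamma^+$---is in fact more direct than the paper's concluding contradiction via Gromov products.
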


\begin{proof}
Let $A(\gamma)=A_{10\delta}(\gamma)$.  By acylindricity of the action, if $H$ is an infinite subgroup, then $\Fix_{50\delta}(H)$ is bounded.  Thus if $\gamma^-\in\overline{\Fix_{50\delta}(H)}$, it must be that $|H|<\infty$.  As $\Fix_{50\delta}(H)$ is $K$--quasi-convex for some $K$, there is a quasi-ray $R$ asymptotic to $A(\gamma)$ with $R^-=\gamma^-$ such that $R\subset \Fix_{50\delta}(H)$.  Let $T$ be the union of all such quasi-rays.  Then there exists a constant $D=D(K,\delta)$ such that for each $h\in H$ and all $x\in T$, we have $d(hx,x)\leq D$.  

As $\gamma^{-N}x\in T$ for $N\geq 0$, it follows that for each $h\in
H$, $$d(x,\gamma^N h\gamma^{-N}x)=d(\gamma^{-N}x,h\gamma^{-N}x)\leq
D.$$  As $T$ is a union of infinite rays, we can choose a point $y\in T$ that
is arbitrarily far from $x$.  For all $N$, $\gamma^{N}h\gamma^{-N}$
moves both $x$ and $y$ by at most $D$.  However, by the acylindricity
of the action the number of such elements is uniformly bounded.  Thus
for each $h\in H$ there is a constant $N_0\geq 0$ such that
$\gamma^{N_0}h\gamma^{-N_0}=h$. 
  Hence, $h\in C_G(\gamma^{N_0})$ and so $d_{\textrm{Haus}}(A(\gamma),hA(\gamma))<\infty$.  Therefore, $h\gamma^+=\gamma^+$ for each $h\in H$.

We will now show that $\gamma^+\in\overline{\Fix_{50\delta}(h)}$ for each $h\in H$.  It will then follow that $\gamma^+\in\overline{\Fix_{50\delta}(H)}$.  Suppose towards a contradiction that $\gamma^+\not\in\overline{\Fix_{50\delta}(h)}$ for some $h\in H$, and consider a sequence $(x_i)_{i\in\mathbb N}$ converging to $\gamma^+$.  It follows that $(hx_i)_{i\in\mathbb N}$ converges to $h\gamma^+$.  By assumption, there exists some $K>0$ such that for all $i\geq K$, $x_i\not\in\Fix_{50\delta}(h)$, so for all $i\geq K$, $d(x_i,hx_i)>50\delta$. Therefore the Gromov product $(\gamma^+\cdot hx_n)_{x_0}\leq 100\delta$, and so $\gamma^+\neq h\gamma^+$, a contradiction.  

\end{proof}


\begin{lemma}(Location of the axis)  \label{LOA}
  Let $X$ be a geodesic $\delta$--hyperbolic space, and
   $\gamma_1,\gamma_2$ two loxodromic isometries of $X$ with disjoint
   pairs of fixed points in $\partial X$. 
Let $V$ be a neighborhood of $\gamma_1^+$ and $U$ be a neighborhood of
$\gamma_2^-$. Then, for sufficiently large $n$ and $k$, $\gamma_1^n\gamma_2^k$ is a
loxodromic isometry whose repelling fixed point in is $U$ and whose
attracting fixed point is in $V$.

\end{lemma}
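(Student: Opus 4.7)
The plan is to combine the north-south dynamics of each loxodromic $\gamma_i$ on $\partial X$ with the hypothesis that the two pairs of fixed points are disjoint, in order to show that a high-power composition $f := \gamma_1^n \gamma_2^k$ has source-sink dynamics on $\partial X$ whose sink lies in $V$ and whose source lies in $U$. Recall that for a loxodromic isometry $\gamma$ of a $\delta$-hyperbolic space, and any neighborhoods $A \ni \gamma^+$ and $B \ni \gamma^-$ in $\partial X$, there exists $n_0$ with $\gamma^n(\partial X \setminus B) \subset A$ for every $n \geq n_0$, and symmetrically for negative powers.

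First I would shrink $V$ and $U$, and choose open neighborhoods $V' \ni \gamma_1^-$ and $U' \ni \gamma_2^+$, so that the four sets $\overline V, \overline U, \overline{V'}, \overline{U'}$ are pairwise disjoint; this is possible since $\partial X$ is Hausdorff and the four fixed points $\gamma_i^\pm$ are distinct. By the north-south dynamics of $\gamma_2$, there is $k_0$ with $\gamma_2^k(\partial X \setminus U) \subset U'$ for all $k \geq k_0$, and by that of $\gamma_1$, there is $n_0$ with $\gamma_1^n(\partial X \setminus V') \subset V$ for all $n \geq n_0$. Since $U' \cap V' = \emptyset$, composing yields
$$
(\gamma_1^n \gamma_2^k)(\partial X \setminus U) \subset V \qquad \text{for } n \geq n_0,\ k \geq k_0.
$$
Applying the same argument to $(\gamma_1^n\gamma_2^k)^{-1} = \gamma_2^{-k}\gamma_1^{-n}$, after possibly enlarging $n_0$ and $k_0$, one gets
$$
(\gamma_1^n \gamma_2^k)^{-1}(\partial X \setminus V) \subset U.
$$

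To finish, I would extract two fixed boundary points for $f = \gamma_1^n\gamma_2^k$ from these source-sink inclusions. The forward-invariance $f(\overline V) \subset V$ (using $\overline V \cap \overline U = \emptyset$) forces the iterates $f^m x_0$, for any basepoint $x_0 \in X$ whose image in $X \cup \partial X$ already lies in the complement of $\overline U$, to accumulate on a fixed boundary point $p_+ \in \overline V$; symmetrically $f^{-m} x_0$ accumulates on a fixed point $p_- \in \overline U$. As $p_+ \neq p_-$, the isometry $f$ has two distinct fixed points on $\partial X$ with attracting/repelling behavior, and is therefore loxodromic with $f^+ = p_+ \in V$ and $f^- = p_- \in U$, as required.

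The main obstacle is the last step: because $X$ need not be proper, $\partial X$ is not compact, and the accumulation argument cannot lean on compactness but must be carried out in the Gromov bordification $X \cup \partial X$. The cleanest way to bypass this is to observe that the two strict inclusions above imply that $f$ has positive stable translation length, by iterating the source-sink estimate to show that $f^m x_0$ escapes every ball around $x_0$. Since an isometry of a hyperbolic space with positive stable translation length is loxodromic, its two boundary fixed points exist and must coincide with the accumulation points constructed, which lie in $V$ and $U$ by construction.
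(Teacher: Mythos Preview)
Your route is genuinely different from the paper's. The paper never touches $\partial X$: it works inside $X$, builds a broken path $\mathfrak p$ out of segments along the two quasi-axes and the bridge between them, checks via Gromov products that $\mathfrak p$ is a $(1,180\delta)$--quasi-geodesic, and then shows that the concrete point $\gamma_2^{-k}y$ is $300\delta$--quasi-minimal for $\gamma_1^n\gamma_2^k$ and is displaced by more than $800\delta$. That metric fact alone forces loxodromicity and simultaneously pins the quasi-axis of $\gamma_1^n\gamma_2^k$ to the segment $\gamma_2^{-k}\sigma_0$ near $A_2$ (and symmetrically near $A_1$), from which the location of the two boundary fixed points follows. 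Because everything happens in $X$, non-properness is irrelevant. Your approach via north--south dynamics on $\partial X$ is cleaner conceptually and gives the two inclusions $f(\partial X\setminus U)\subset V$ and $f^{-1}(\partial X\setminus V)\subset U$ correctly.

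The gap is exactly where you flag it, but your proposed repair is not correct as written. Two separate problems. First, the inclusions you established live only on $\partial X$, so they say nothing about an orbit $f^m x_0$ for $x_0\in X$; to make the ``escapes every ball'' claim you would have to upgrade the north--south dynamics of each $\gamma_i$ to open sets in the bordification $X\cup\partial X$ and rerun the composition argument there, which you have not done. Second, even granting that $f^m x_0$ is unbounded, the implication ``unbounded orbit $\Rightarrow$ positive stable translation length'' is false: a parabolic isometry has unbounded orbits and zero stable translation length. One can rule out the parabolic case separately (its unique fixed point would have to lie in both $\overline V$ and $\overline U$), and one can rule out the elliptic case via the bordification ping-pong, but neither of these is what you wrote. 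As it stands, the last paragraph contains an incorrect implication, so the proof is incomplete; the paper's quasi-axis computation is precisely what fills this hole without any appeal to boundary compactness.
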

\begin{proof}

For $i=1,2$, let $A_i=A_{10\delta}(\gamma_i)$ be a $10\delta$--quasi-axis for $\gamma_i$.  
Let $x$ be an arbitrary point in the closest point projection of $A_2$ on $A_1$, and let $y$ be an arbitrary point in the  
 closest point projection of $x$ on $A_2$. 
Let $\sigma_0$ be a sub-segment of $A_2$ of length $1000\delta$ starting
at $y$, and let $y_0$ be its terminal point 
(see Figure \ref{fig1}).  

\begin{figure}[h]
\def\svgwidth{2in}  
  \centering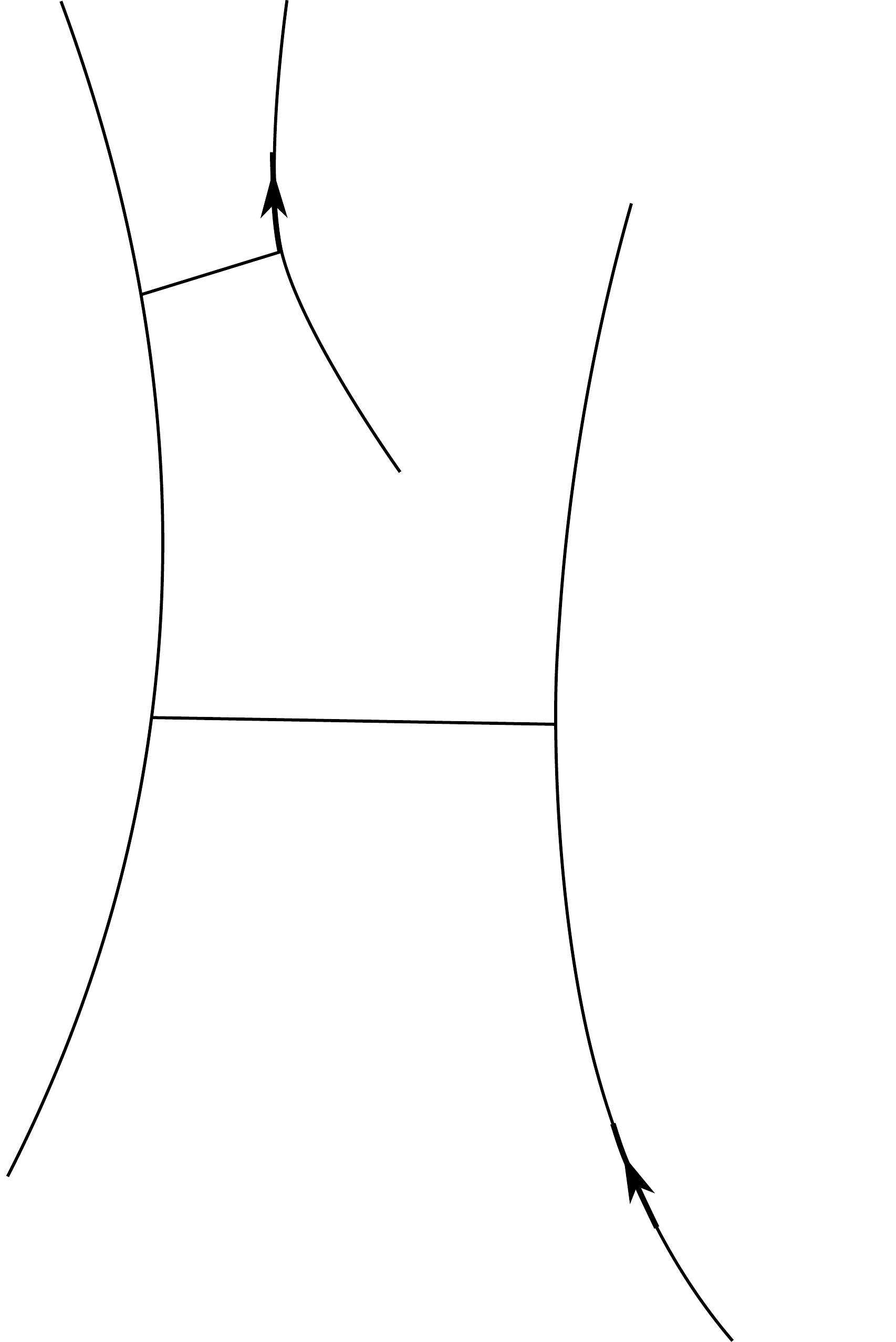 \\
	\caption{Location of axes}\label{fig1}
\end{figure}

Consider the following path,  $\frakp$, which is obtained
 by concatenating quasigeodesic segments: $$\frakp= [\gamma_2^{-k}y,
 y] \cup [y,x] \cup [x, \gamma_1^n x] \cup [\gamma_1^n x, \gamma_1^n y ]\cup
 [\gamma_1^n y , \gamma_1^n y_0]. $$

Notice first that $\frakp$ is a $(1, 4\cdot
45\delta)$--quasi-geodesic. To prove this, it is sufficient to observe
that each of the Gromov products $(\gamma_2^{-k} y \cdot x
)_y$, $(y \cdot \gamma_1^n x)_x  $, $(x \cdot \gamma_1^n y )_{\gamma_1^n x}$, and $(
\gamma_1^n x \cdot \gamma_1^n y_0 )_{ \gamma_1^n y }$ are bounded above by
$20\delta$.  This fact follows from the definitions of the points (sometimes after translating
the configuration by $\gamma_1^{-1}$).

Now,  we claim that 
$\gamma_2^{-k}y$ is   
$300\delta$--quasi-minimal for $\gamma_1^n
\gamma_2^k$. 
To show this, we will use the following fact, which is standard.  If a point $p$ is not $300\delta$--quasi-minimal for an isometry $\phi$ of a
$\delta$--hyperbolic space, then for any point $p'\in [p,\phi p]$ at distance
$100\delta$ from $p$ 
one has $d(p', \phi p') \leq d(p, \phi p) - 190\delta$. 
(Take a
  $\delta$--quasi-minimal point $p_0$ closest to $p$.  Since $p$ is not
  $300\delta$--quasi-minimal, it is at distance at least
  $145\delta$ from $p_0$.  Therefore $p'$ is at
  least $97\delta$--closer to $p_0$ than to $p$,  which easily ensures the
  claim.)

Let $p=\gamma_2^{-k} y$ and 
 $\phi = \gamma_1^n\gamma_2^k$. 
Since a point $p'\in[p,\phi p]$ at distance $100\delta$ from $p$ must lie on $\gamma_2^{-k}\sigma_0$, it follows that $\phi p' \in \gamma_1^n\sigma_0$.  Since
 $\frakp$  is a quasigeodesic,  
$$|d(p', \phi p') - d(p, \phi p) | \leq 4\cdot 45 \delta =
180\delta.$$  Therefore, the above fact implies that 
$\gamma_2^{-k}y$ is  $300\delta$--quasi-minimal for $\gamma_1^n
\gamma_2^k$.

Since $\gamma_2^{-k}y$ is $300\delta$--quasi-minimal for $\phi$ and is displaced by more than $800\delta$, $\phi$ must be loxodromic.  Moreover,  the $120\delta$--quasi-axis $A=A_{120\delta}(\phi)$ 
contains $\gamma_2^{-k}\sigma_0$, and therefore its image $\gamma_1^n
\sigma_0$.  In other words, 
 the closest point projection of $A$ 
onto $A_2$ has non-empty intersection with $\gamma_2^{-k}\sigma_0$.
Thus, for  $k$ sufficiently large, the repelling point of   $\gamma_1^n\gamma_2^k$ is in
$U$. 

Let $\sigma_1$ be a subsegment of $A_1$ of length $1000\delta$ ending at $\gamma_1^nx$.  By applying the same argument to $\phi'=(\gamma_1^{n}\gamma_2^{k})^{-1}$ with $\sigma_1$ in place of $\sigma_0$, we conclude that for $n$ sufficiently large, the repelling point of $\phi'$ (which is the attracting point of $\gamma_1^n\gamma_2^k$) is in $V$.


\end{proof}


Here is the main proposition of this section.

\begin{prop} \label{gamma} Let $G$ be a group that admits a cobounded acylindrical action by isometries on a non-elementary geodesic hyperbolic space $(X,d_X)$.  Let
  $H_1,\dots,H_k\subseteq G$ be a family of subgroups that are elliptic for the action of $G$ on
  $X$ such that for every $i$, each non-trivial subgroup $H$ of $H_i$ satisfies $\Fix_{50\delta}(H) \neq X$.  Then
  there exist infinitely many independent loxodromic elements whose $10\delta$--quasi-axes have bounded intersection with each $\Fix_{50\delta}(H_i)$.

Moreover, among them, there are infinitely many independent loxodromic elements $\gamma$ such that no element in $\bigcup_i H_i\setminus\{1\}$  preserves the pair $\{\gamma^+, \gamma^-\}$ in $\partial X$.
\end{prop}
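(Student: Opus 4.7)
The plan is to construct loxodromic elements $\gamma$ whose attracting and repelling fixed points in $\partial X$ avoid certain proper closed subsets of the boundary, and to deduce the bounded intersection of quasi-axes with $\Fix_{50\delta}(H_i)$ from disjointness of limit sets of quasi-convex subsets.

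For each non-trivial subgroup $H\le H_i$, let $\Lambda(H):=\overline{\Fix_{50\delta}(H)}\cap\partial X$. I first verify that $\Lambda(H)$ is a proper closed subset of $\partial X$: when $H$ is infinite, acylindricity forces $\Fix_{50\delta}(H)$ to be bounded and so $\Lambda(H)=\emptyset$; when $H$ is finite, $\Fix_{50\delta}(H)$ is a $K$-quasi-convex subset which is not all of $X$ by hypothesis, and in a cobounded non-elementary acylindrical action on a hyperbolic space the limit set of a proper quasi-convex subset of $X$ is a proper closed subset of $\partial X$. Hence $\Omega:=\partial X\setminus\bigcup_i\Lambda(H_i)$ is a non-empty open subset of $\partial X$. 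By non-elementarity and coboundedness, loxodromic fixed points are dense in $\partial X$, so we can choose two independent loxodromic elements $\phi_1,\phi_2$ with all four boundary fixed points inside $\Omega$.

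Applying Lemma~\ref{LOA}, for $n,k$ sufficiently large the element $\gamma_{n,k}=\phi_1^n\phi_2^k$ is loxodromic with fixed points in arbitrarily small neighborhoods of $\phi_1^+$ and $\phi_2^-$, which we take inside $\Omega$. Introducing further independent loxodromic generators $\phi_3,\phi_4,\dots$ with fixed points in pairwise disjoint open subsets of $\Omega$ yields infinitely many mutually independent loxodromic elements $\gamma$ with $\gamma^{\pm}\in\Omega$. For each such $\gamma$, the $10\delta$-quasi-axis $A(\gamma)$ and $\Fix_{50\delta}(H_i)$ are quasi-convex subsets of $X$ with disjoint closures in $\partial X$, hence their intersection is bounded in $X$, giving the first conclusion.

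For the ``moreover'' claim, suppose $h\in\bigcup_iH_i\setminus\{1\}$ preserves the pair $\{\gamma^+,\gamma^-\}$. Since $h$ is elliptic, some power $h^k$ with $k\in\{1,2\}$ fixes both $\gamma^{\pm}$ pointwise; replacing $k$ by a suitable multiple and using the hypothesis $\Fix_{50\delta}(\langle h\rangle)\ne X$, one can arrange $h^k\ne 1$ with $\Fix_{50\delta}(h^k)\ne X$, and then the contrapositive of the final argument in the proof of Lemma~\ref{lem:axis} yields $\gamma^{\pm}\in\overline{\Fix_{50\delta}(h^k)}\cap\partial X$, a proper closed subset. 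Thus the $\gamma$'s satisfying the ``moreover'' condition are those whose fixed points avoid an additional collection of proper closed subsets of $\partial X$. The main obstacle is that this collection is a priori large; my plan is to invoke acylindricity to note that the stabilizer of any pair $\{\gamma^+,\gamma^-\}$ is virtually cyclic and hence contains only finitely many elliptic elements, and then to build the required infinite family recursively via Lemma~\ref{LOA}, at each stage choosing the next loxodromic element with fixed points in a smaller open subset of $\Omega$ that avoids the finitely many new proper closed subsets arising from the elliptic stabilizers of the previously constructed loxodromics.
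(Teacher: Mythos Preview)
Your treatment of the first assertion is broadly sound and somewhat more direct than the paper's. The paper runs an induction, adding one $H_i$ at a time: it picks an auxiliary loxodromic $g_{j+2}$ with fixed points outside $\overline{F(H_{j+1})}$, applies Lemma~\ref{LOA} to produce $\gamma_j$ with $\gamma_j^+$ in a prescribed neighborhood, and then invokes Lemma~\ref{lem:axis} to force $\gamma_j^-$ out of $\overline{F(H_{j+1})}$ as well. Your shortcut (pick $\phi_1,\phi_2$ with all four fixed points already in $\Omega$) is viable, but note that it implicitly relies on the same mechanism: density of attracting fixed points alone does not give a single loxodromic with \emph{both} endpoints in a prescribed open set; you need either Lemma~\ref{lem:axis} or a coboundedness argument (conjugating a fixed loxodromic so that its whole axis is pushed toward a point of $\Omega$) to arrange that.

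The ``moreover'' part, however, has a genuine gap. Your recursive plan is circular: the finitely many elliptic elements in the setwise stabilizer of $\{\gamma^+,\gamma^-\}$ depend on $\gamma$, so you cannot know which closed sets $\overline{\Fix_{50\delta}(h^k)}$ to avoid until after $\gamma$ has been chosen. Avoiding the bad $h$'s attached to \emph{previously constructed} loxodromics does nothing to prevent the \emph{current} $\gamma$ from having its own, entirely new, bad $h$'s; so no step of your recursion ever produces a $\gamma$ actually known to satisfy the conclusion. There is a second problem with the reduction itself: if $h$ is an involution that swaps $\gamma^+$ and $\gamma^-$, then $h^2=1$ and no nontrivial power of $h$ fixes $\gamma^{\pm}$ pointwise, so the appeal to Lemma~\ref{lem:axis} fails and you cannot place $\gamma^{\pm}$ in any $\overline{\Fix_{50\delta}(h^k)}$. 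The paper sidesteps both issues with a different argument: if $h\in H_i\setminus\{1\}$ preserves $\{\gamma_n^+,\gamma_n^-\}$ (even by swapping), then $h$ must $20\delta$-quasi-fix the long approach segment from $F(H_i)$ to $A_{10\delta}(\gamma_n)$, and acylindricity bounds the number of such $h$ uniformly in $n$; hence if \emph{every} conjugate $\gamma_n$ with $\gamma_n^{\pm}\to\xi$ failed, a single nontrivial $h$ would fix an open subset $U'\subset\partial X$, and a loxodromic with both endpoints in $U'$ would then force $\overline{F(h)}\cup\overline{F(h^{g'})}=\partial X$, contradicting the first assertion.
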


\begin{proof}
We begin by proving the first assertion. Throughout the proof, let $F(H)=\Fix_{50\delta}(H)$.  If $H_i$ is an
infinite order elliptic subgroup, then $F(H_i)$ is a bounded set, and
the first assertion holds for every loxodromic element.  Thus it suffices
to assume that $H_1,\dots,H_k$ have finite order.  In this case, we
will show that we can find infinitely many loxodromic elements
$\gamma$ such that $\gamma^\pm\not\in \overline{F(H_i)}$ for all $i$.
It will then follow that $\diam(A_{10\delta}(\gamma)\cap F(H_i))$ is bounded for all $i$.

Observe that since  $\Fix_{50\delta}(H_i) \neq X$ for each $i$,  and
$X$ is non-elementary, we
have that $\overline{F(H_i)} \neq \partial X$. 

We will proceed by induction.  Choose any loxodromic element, $g_1\in
G$.  If $g_1^\pm\not\in \overline{F(H_i)}$ for all $i$, we stop.  Otherwise, choose another loxodromic element $g_2\in G$ with
$g_2^\pm\not\in\overline{F(H_1)}$. Such an element can be found since $\overline{F(H_1)}$ is
not the entire boundary and the action of $G$ on $X$ is cobounded.  If $g_2^\pm\not\in\overline{F(H_i)}$ for all $i$, we stop.  Otherwise, let $U\subset \partial X$ be an open set containing $g_2^+$ such that $U\cap\overline{F(H_1)}=\emptyset$.  By Lemma \ref{LOA}, there exists $N_0$ such that for all $n_0>N_0$,  $(g_2^{n_0}g_1^{n_0})^+\in U$.  Letting $\gamma_0=g_2^{n_0}g_1^{n_0}$, it follows that $\gamma_0^+\not\in \overline{F(H_1)}$.  By Lemma \ref{lem:axis}, it follows that, additionally, $\gamma_0^-\not\in\overline{F(H_1)}$.  If $\gamma_0^\pm\not\in \overline{F(H_i)}$ for all $i$, we stop. 

Otherwise, as $\overline{F(H_2)}$ is not the entire boundary and the action of $G$ on $X$ is cobounded,  we may choose a   a loxodromic element
$g_3$ with $g_3^\pm\not\in\overline{F(H_2)}$.  If $g_3^\pm\not\in\overline{F(H_i)}$ for all $i$, we stop.  
Otherwise, let $U_1\subset \partial X$ be an open subset containing $g_3^-$ such that $U_1\cap \overline{F(H_2)}=\emptyset$.
By Lemma \ref{LOA}, there exists $N_1$ such that for all $n_1>N_1$, $(\gamma_0^{n_1} g_3^{n_1})^+\in U$ and $(\gamma_0^{n_1} g_3^{n_1})^-\in U_1$.  Let $\gamma_1=\gamma_0^{n_1} g_3^{n_1}$.  Then Lemma \ref{lem:axis} implies that $$\gamma_1^\pm\not\in \overline{F(H_1)}\cup\overline{F(H_2)}.$$    If $\gamma_1^\pm\not\in\overline{F(H_i)}$ for all $i$, we stop.

Otherwise, we continue this procedure, and after at most $k-1$ iterations we will have produced an element $\gamma=\gamma_j$ for some $0\leq j\leq k-1$ such that $\gamma^\pm\not\in\overline{F(H_i)}$ for all $i$.  To find infinitely many such elements $\gamma$, we need only change the final step of the procedure.  By construction, $\gamma=\gamma_{j-1}^{n_{j}}g_{j+2}^{n_j}$ where $n_j>N_j$ for $N_j$ provided by Lemma \ref{LOA}.  However, it is clear from the construction that the exponents of $\gamma_{j-1}$ and $g_{j+2}$ may be chosen independently, so for each $n_j,n_j'>N_j$, $$(\gamma_{j-1}^{n_j}g_{j+2}^{n_j'})^\pm\not\in\overline{F(H_i)} \quad \textrm{for all $i$}.$$  

If this procedure terminated early, then we chose a loxodromic element $g=g_j$ for some $0\leq j<k-2$ such that $g^\pm\not\in \overline{F(H_i)}$ for all $i$.  In this case, we proceed as follows.  Choose any $h\in G$, and consider $h^{-1}gh$, a loxodromic element such that $A_{10\delta}(h^{-1}gh)\cap A_{10\delta}(g)=\emptyset$.    Let $U$ be an open set containing $g^+$ such that $U\cap \left(\bigcup_{i=1}^k \overline{F(H_i)}\right)=\emptyset$.  By applying Lemma \ref{LOA} first to the pair $g$ and $h^{-1}gh$ and then again to the resulting element $g^nh^{-1}g^nh$ and $g^{-1}$, we can conclude that there exists an $N\geq 0$ such that for each $n\geq N$, $((g^nh^{-1}g^nh)^ng^{-n})^\pm\in U$.  Thus we have constructed infinitely many loxodromic elements $\gamma=(g^nh^{-1}g^nh)^ng^{-n}$ with $\gamma^\pm\not\in\overline{F(H_i)}$ for all $i$, as desired.

We now prove the second assertion. Consider an element $\gamma$ found by the above procedure. Then the point $\gamma^+ \in \partial X$ is outside $\bigcup_i \overline{F(H_i)} \cap \partial X$, which is closed in $\partial X$. There is a neighborhood $U_{\gamma^+}$ containing $\gamma^+$ which avoids  $\bigcup_i \overline{F(H_i)} \cap \partial X$, and, given an arbitrary $R>100\delta$,  we may  choose this neighborhood so that its convex hull in $X\cup \partial X$ (i.e., the union of geodesics joining its points) is at distance at least $R$ from any $F(H_i)$. The action of $G$ on $X$ is cobounded,  so for all $\xi \in U_{\gamma^+}$, 
 there is a sequence of conjugates of $\gamma$, which we will call $\gamma_n$, such that  $(\gamma_n)^+$ and   $(\gamma_n)^-$ converge to $\xi$. If $h\in H_i$ preserves the pair $\{\gamma_n^+,\gamma_n^-\}$, then $F_{20\delta}(h)$ must contain the segment between $F(H_i)$ and its closest point projection on the quasi-axis $A_{10\delta}(\gamma_n)$. Note that this segment has a long initial subsegment that is $2\delta$-close to any geodesic ray from $F(H_i)$ to $\xi$.  With a suitable choice of $R$, the acylindricity of the action implies that only finitely many elements can  $20\delta$-quasi-fix  such a segment.  By extracting an appropriate subsequence, one concludes that if there is such a non-trivial element for each $\gamma_n$ 
 then there exists  $i$ and an element $h_i \in H_i\setminus\{1\}$ fixing an open neighborhood inside $U_{\gamma^+} $, say $U'$. However, there is a loxodromic element in $G$ such that both fixed points at infinity are contained in $U'$, and a large power of this element, which we will call $g'$, sends $\partial X\setminus U'$ inside $U'$.  Thus $\overline{F(h_i)}$ and $\overline{F(h_i^{g'})}$ cover $\partial X$. This contradicts the first point of this proposition.

\end{proof}




\section{Ping-pong with everyone}

\begin{prop}\label{pingpong}
Let $G$ be a group acting  acylindrically by isometries on a geodesic $\delta$--hyperbolic space $(X,d_X)$, and let $G_0$  be a subgroup of $G$ that is elliptic for the action on $X$. Assume that
$\gamma \in G$ is loxodromic on $X$ such that no element of $G_0\setminus\{1\}$ preserves  $A_{10\delta}(\gamma)$, and such that there is a constant $D\geq 0$ satisfying  
$\diam(\Fix_{50\delta}(G_0)\cap A_{10\delta}(\gamma))\leq D$.  Then there is a constant $C\geq 0$ such that for any $N$ such that the translation length of $\gamma^N$ is at
least $C$, the group generated by $\gamma^N$ and $G_0$ is
the free product of $\langle \gamma^N \rangle$ and $ G_0 $. 
\end{prop}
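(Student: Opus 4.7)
The plan is a classical ping-pong argument on $X$ itself, set up using closest-point projections to the quasi-axis $A:=A_{10\delta}(\gamma)$. First I would fix a base point $x_0\in A$ at distance greater than $D$ from the bounded set $\Fix_{50\delta}(G_0)\cap A$; such $x_0$ exists since $A$ is unbounded while that intersection has diameter at most $D$. Set $R_0:=\sup_{g\in G_0}d(x_0,gx_0)<\infty$ by ellipticity of $G_0$. The constant $C$ will be chosen at the end depending on $R_0$, $D$, and $\delta$, and everything will be done under the hypothesis $L:=\ell(\gamma^N)\geq C$.

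The key preliminary claim is the existence of a constant $R_1=R_1(R_0,D,\delta)$, independent of $g\in G_0\setminus\{1\}$, such that the closest-point projection $\pi_A(gA)$ is contained in $B(x_0,R_1)$. I would establish it in two steps. First, no $g\in G_0\setminus\{1\}$ can fix either of $\gamma^\pm$ as a boundary point: if $g\cdot\gamma^+=\gamma^+$, then since $g$ is elliptic any geodesic ray from $x_0$ to $\gamma^+$ is quasi-fixed by $g$ (two quasi-rays with the same endpoint fellow-travel), so $\gamma^+\in\overline{\Fix_{50\delta}(\langle g\rangle)}$ up to adjusting constants, and Lemma~\ref{lem:axis} forces $\gamma^-\in\overline{\Fix_{50\delta}(\langle g\rangle)}$ as well; hence $g$ preserves the pair $\{\gamma^+,\gamma^-\}$ and therefore $A$, contradicting the hypothesis. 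Second, with endpoints disjoint and $gA$ passing within $R_0$ of $x_0$, standard hyperbolic geometry ensures that $A$ and $gA$ fellow-travel only in a bounded region near $x_0$; crucially, the length of fellow-travel is bounded by $D+O(\delta)$, because a fellow-travel subsegment of length $\ell$ forces $g$ to $50\delta$-quasi-fix a subsegment of $A$ of comparable length, contradicting $\diam(\Fix_{50\delta}(G_0)\cap A)\leq D$. Together this gives $\pi_A(gA)\subset B(x_0,R_1)$ with $R_1=R_0+D+O(\delta)$, uniformly in $g$.

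Granted the claim, I would set up ping-pong sets
\begin{align*}
U^+&:=\{y\in X:\pi_A(y)\text{ lies past }x_0\text{ on the }\gamma^+\text{-side at distance}>R_1+100\delta\},\\
U^-&:=\{y\in X:\pi_A(y)\text{ lies past }x_0\text{ on the }\gamma^-\text{-side at distance}>R_1+100\delta\},\\
V&:=X\setminus(U^+\cup U^-).
\end{align*}
Taking $C\gg R_1$, one checks: (i) for $k\neq 0$, $\gamma^{kN}\cdot V\subseteq U^{\operatorname{sign}(k)}$, since $\gamma^{kN}$ translates along $A$ by $|k|L\geq C$ toward $\gamma^{\operatorname{sign}(k)}$; and (ii) for $g\in G_0\setminus\{1\}$, $g\cdot(U^+\cup U^-)\subseteq V$, because any $y\in U^\pm$ lies within $10\delta$ of $A$, so $gy$ lies within $10\delta$ of $gA$, and by the key claim $\pi_A(gy)$ is within $R_1+O(\delta)$ of $x_0$.

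A standard ping-pong argument then concludes the proof: applying any reduced alternating word in $\gamma^N$ and $G_0\setminus\{1\}$ to the base point $x_0\in V$ produces a point in $U^+\cup U^-$, hence distinct from $x_0$, so the word is nontrivial in $G$, and the natural map $\langle\gamma^N\rangle*G_0\to G$ is injective. The main obstacle is the uniform projection bound in the key claim: extracting a single constant $R_1$ valid simultaneously for all $g\in G_0\setminus\{1\}$ requires combining the ellipticity of $G_0$ (to get $R_0<\infty$), the diameter hypothesis on $\Fix_{50\delta}(G_0)\cap A$ (to bound fellow-travel length uniformly), and the exclusion of boundary-fixing via Lemma~\ref{lem:axis}.
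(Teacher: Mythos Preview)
Your projection ping-pong strategy is a reasonable alternative to the paper's local-to-global quasigeodesic argument, but the justification of the key claim has a genuine gap.

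The problem is the sentence ``a fellow-travel subsegment of length $\ell$ forces $g$ to $50\delta$-quasi-fix a subsegment of $A$ of comparable length, contradicting $\diam(\Fix_{50\delta}(G_0)\cap A)\leq D$.'' Two things fail here. First, fellow-travel of $A$ and $gA$ only says that points of $A$ are close to \emph{some} point of $gA$, not to their own $g$-images; the displacement $d(x,gx)$ along the fellow-travel segment is controlled by $R_0+O(\delta)$, not by $50\delta$. Second, and more seriously, the hypothesis bounds $\Fix_{50\delta}(G_0)\cap A$, the set of points quasi-fixed by \emph{every} element of $G_0$; for an individual $g$ the set $\Fix_{50\delta}(g)\cap A$ can be much larger, so the constant $D$ gives no information about a single $g$. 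Hence $D$ alone does not deliver the uniform projection bound $R_1$.

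What actually produces $R_1$ is acylindricity, and this is precisely the input the paper uses. The paper introduces $D'$, a uniform bound on $\diam\pi_A(gA)$ over all conjugates $gA$ with endpoint pair different from $\{\gamma^+,\gamma^-\}$. Such a $D'$ exists because if $A$ and $gA$ fellow-travel on a segment of length $M$ then the elements $\gamma^{-j}g\gamma^{j}g^{-1}$ for $0\le j\lesssim M/\ell(\gamma)$ all move two far-apart points by $O(\delta)$, contradicting acylindricity once $M$ is large. With $D'$ in hand your key claim holds with $R_1=R_0+D'+O(\delta)$, and the rest of your scheme goes through; but you should recognise that $D'$, not $D$, is doing the work.

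There is a second, smaller slip in step (ii): the sets $U^\pm$ are half-spaces defined by $\pi_A$, so $y\in U^\pm$ need not lie within $10\delta$ of $A$. The inclusion $g\cdot(U^+\cup U^-)\subset V$ is still correct, but for a different reason: since $\pi_{gA}(gy)=g\,\pi_A(y)$ lies far from the bridge $\pi_{gA}(A)$, any geodesic from $gy$ to $A$ must pass near that bridge, so $\pi_A(gy)\in\pi_A(gA)^{+O(\delta)}\subset B(x_0,R_1+O(\delta))$.

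For comparison, the paper avoids projections of arbitrary points altogether: for a reduced word $h=g_0^{(1)}t^{l_1}\cdots g_0^{(k)}t^{l_k}$ it builds a broken geodesic from $x_0$ to $hx_0$ through the successive prefix-images of $x_0$, bounds each Gromov product at a corner directly via $D'$ (a large product would make two conjugate axes fellow-travel past $D'$, forcing the intervening $g_0^{(j)}$ to preserve $\{\gamma^+,\gamma^-\}$), and then invokes the local-to-global lemma for quasigeodesics to conclude $d(x_0,hx_0)>0$.
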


\begin{proof}


Let $D'$ be an upper bound on the diameter of the closest point projection of the
$10\delta$--quasi-axis of one conjugate of $\gamma$ to the $10\delta$--quasi-axis of a conjugate with different fixed points at infinity. As $X$ is a geodesic $\delta$--hyperbolic space, there exist constants $K_1,K_2$ such that all $K_1$--local $(1,10\delta)$--quasigeodesics are $K_2$--global quasigeodesics.  Let
$\Delta = \max\{D,D', 1000\delta, K_1,K_2^2\}$.  Choose $N$ such that the translation length of $\gamma^N$ is at
least $ 10 \Delta +1000\delta$.    

Consider the free product $H= G_0 *\langle t\rangle$. We will show that the natural
homomorphism  from
 $H$ to $\langle \gamma^N, G_0\rangle$ sending $G_0$ to $G_0$ and $t$ to 
$\gamma^N$ is injective. It will then follow that it is an isomorphism.

In order to show injectivity, consider a normal form for an
element of $H$:
$$ h =   g_0^{(1)} t^{l_1} g_0^{(2)} t^{l_2} \dots g_0^{(k)} t^{l_k}, $$
with $ l_j \in \mathbb{Z}\setminus\{0\}$ for $1\leq j<k$, $l_k\in \mathbb Z$, $g_0^{(i)} \in
G_0\setminus\{1\}$ for $1<i\leq k$, and $g_0^{(1)}\in G_0$.  The proof will only depend on the conjugacy
class of $h$, and thus we may assume $g_0^{(1)}\neq 1$.  In what follows, we identify $\langle t\rangle$ with its image $\langle \gamma^N\rangle$ in $G$.

Pick a base point $x_0\in{\rm Fix}_{10\delta}(G_0)$ and write 

$$x_{2j-1} = g_0^{(1)} t^{l_1}  \dots g_0^{(j)} x_0,   $$ 

$$x_{2j} = g_0^{(1)} t^{l_1}  \dots g_0^{(j)} t^{l_j}  x_0. $$ 

Let us also write $h_{2j-1} = g_0^{(1)} t^{l_1}  \dots g_0^{(j)}$.

Consider geodesics
$\frakp_i = [x_i, x_{i+1}]$. Thus for each
even $i$, $\frakp_i $ is a geodesic of length at most $10\delta$, and
for each odd $i$, $\frakp_i $ is a geodesic containing a central
subsegment of length at least $10 \Delta+ 950\delta $  that is
in     
 the $10\delta$--quasi-axis of $\gamma^{ h_{i}^{-1}} $.  

When $i$ is odd, let $R_i$ be the distance from $x_i$ to the
quasi-axis of $ \gamma^{h_{i}^{-1}} $.  Observe that $R_i \leq  | \frakp_i  | /2   -
5\Delta -400\delta$ since the translation length of $ \gamma^{h_{i}^{-1}} $ is at
least $10\Delta +1000\delta$.  Observe also  that $|R_i-R_{i+2}|\leq 10\delta$, since 
  $(h_i t^{l_{(i+1)/2}} )^{-1}h_{i+2}$ 
  is an element of $G_0$, and hence moves $x_0$ by at
  most $10\delta$.

 \begin{figure}[h]
\def\svgwidth{3in}  
  \centering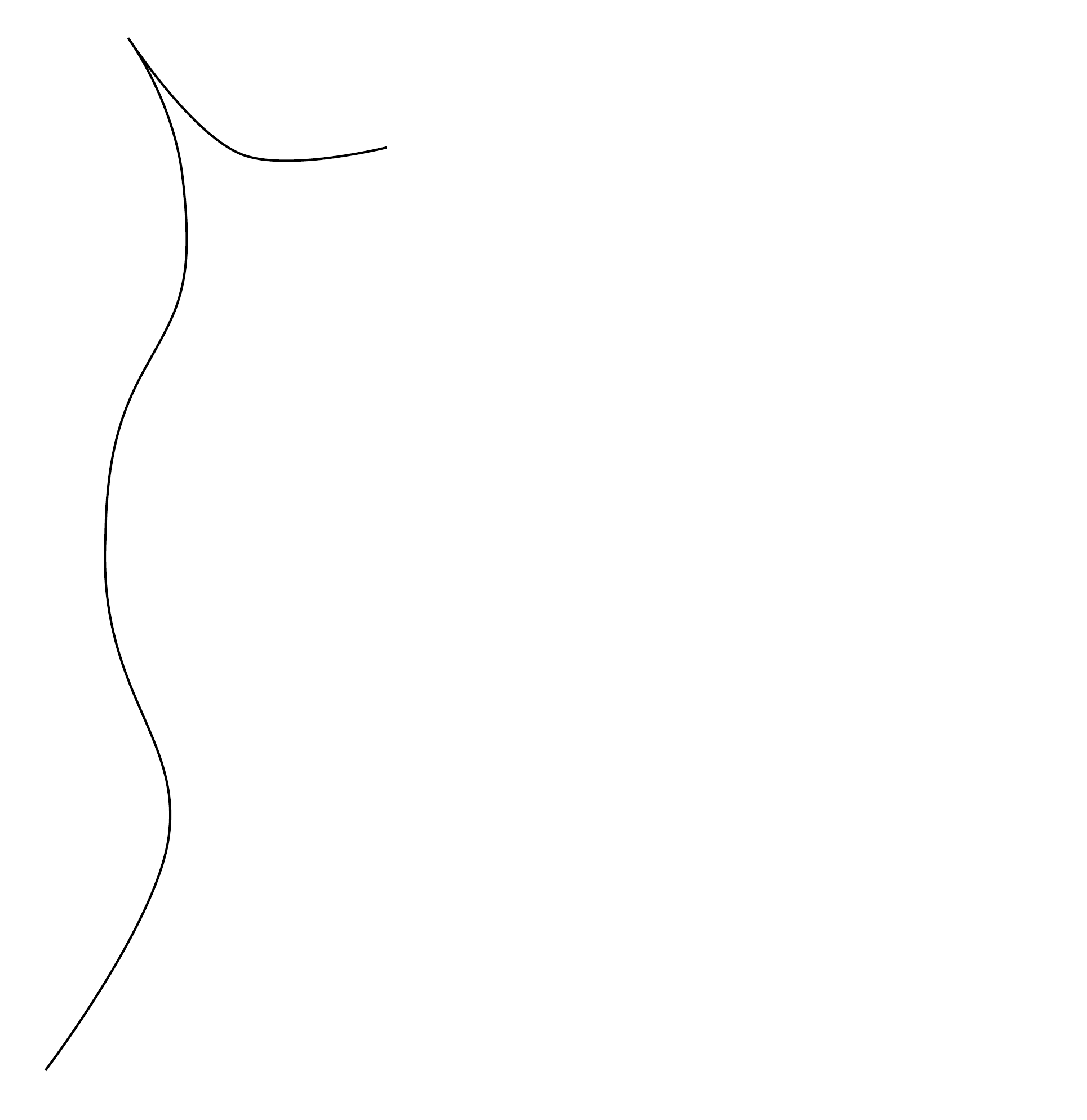 \\
	\caption{If the Gromov product $( x_{i+1} \cdot  x_{i-2})_{
  x_{i}}$ is larger than  $R_i+2\Delta +100\delta$ for $i$ odd, then the quasiaxes of $\gamma^{h_{i-2}^{-1} } $ (dotted, left) and of
$\gamma^{  h_{i }^{-1}}$ (dotted, right) remain
$10\delta$--close  for a length at least 
       $  2\Delta  $. 
 }\label{fig2}
\end{figure}


We now find an upper bound on the Gromov product  $( x_{i+1} \cdot  x_{i-2})_{
  x_{i}}$ for $i$ odd. Assume towards a contradiction that it is larger than  $R_i+2\Delta +100\delta$. 
Then it follows that $\frakp_{i-2}$ and $\frakp_{i}$
remain $2\delta$--close to each other for a segment of length at least
$R_i+2\Delta +50\delta   $ (see Figure \ref{fig2}).    In particular, by the definition of $R_i$,   the quasi-axes of $\gamma^{h_{i-2}^{-1} } $ and of
$\gamma^{  h_{i }^{-1}}$ remain
$10\delta$--close  for a length of at least 
       $  2\Delta  $.

 \begin{figure}[h]
\def\svgwidth{3in}  
  \centering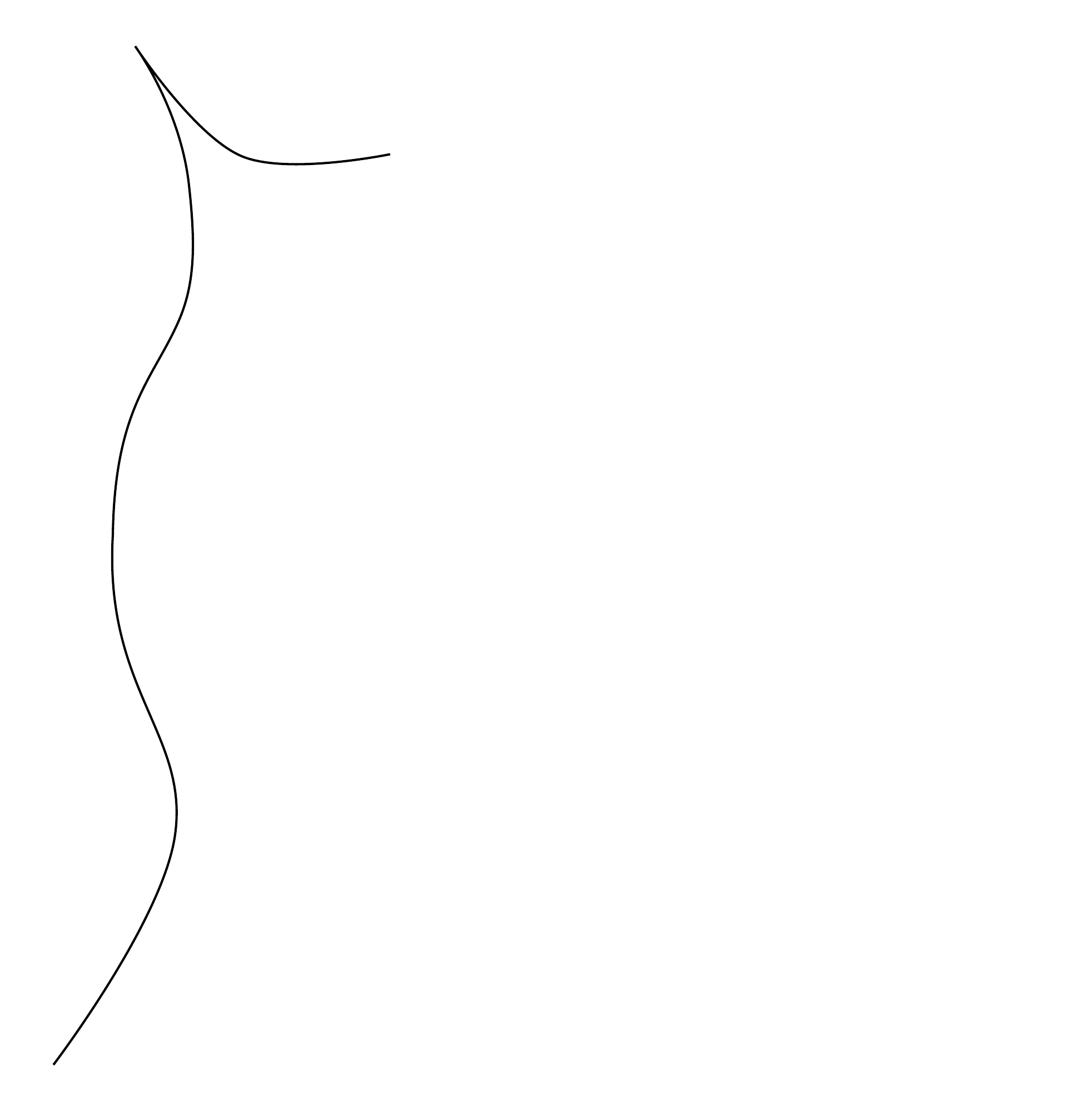 \\
	\caption{The concatenation of segments $[m_{i},m_{i+2}]$, for odd $i$, is a quasigeodesic.}\label{fig3}
\end{figure}


By the definition of $\Delta$,
this implies that their fixed points at infinity are the same, and thus $g_0^{((i-1)/2)}$ preserves $\{\gamma^+, \gamma^-\}$. But this is forbidden by assumption, and we have our contradiction.


Therefore,   $$( x_{i+1} \cdot  x_{i-2})_{ x_{i}}   \leq | \frakp_i  | /2   - 2\Delta +
100\delta,   $$ and, similarly,  $$( x_{i+1} \cdot  x_{i-2})_{ x_{i}}\leq | \frakp_{i-2}  | /2
- 2\Delta + 100\delta.$$

Now let $m_{2i+1}$ to be the midpoint of $\frakp_{2i+1}$ for all $0\leq i\leq k-1$, let
$m_{-1}=1$, and let $m_{2k+1} = h$. 
The final segment of  $[m_{2i-1}, m_{2i+1}] $ of length
$2\Delta$  and the initial segment of $[m_{2i+1}, m_{2i+3}] $ of
length $2\Delta$  are $2\delta$-close to $\frakp_{2i+1}$, and  $( m_{2i-1} \cdot  m_{2i+3})_{ m_{2i+1}}   \leq 2\delta$. Thus, the concatenation of the geodesic segments $[m_{2i-1}, m_{2i+1}]$ are a $(2\Delta)$--local
$(1,10\delta)$--quasigeodesics.  
By the choice of $\Delta$, this concatenation is therefore a global
$K_2$--quasigeodesic (see Figure \ref{fig3}.)  The length of this quasigeodesic is more than $4\Delta$, and thus it follows that its
endpoints are at a distance greater than $4\Delta/K_2 - K_2\geq 0$.  Setting $C=10\Delta+1000\delta$ completes the proof.

\end{proof}

\begin{theo}\label{theo;pingpong_with_elliptic}
 Let $G$ be a group with no non-trivial
finite normal subgroups.  Let $(X,d)$ be a non-elementary geodesic  $\delta$--hyperbolic space upon which
$G$ acts acylindrically, coboundedly, and by isometries. Then for every finite collection
$\{H_1,\dots, H_k\}$  of subgroups
of $G$ that are elliptic for the action on $X$, there exists an element $\gamma\in
G$ such that $\langle \gamma, H_i\rangle = \langle
\gamma\rangle * H_i$ for all $1\leq i\leq k$. 

\end{theo}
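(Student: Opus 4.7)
The plan is to chain Proposition \ref{gamma} with Proposition \ref{pingpong}: first produce a loxodromic element $\gamma$ whose $10\delta$--quasi-axis meets each $\Fix_{50\delta}(H_i)$ in a bounded set and whose fixed-point pair at infinity is unstabilized by every non-trivial element of $\bigcup_i H_i$, and then pass to a power of $\gamma$ large enough that the ping-pong mechanism of Proposition \ref{pingpong} applies uniformly against every $H_i$ at once.

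The first real task is to convert the ``no non-trivial finite normal subgroup'' hypothesis on $G$ into the hypothesis of Proposition \ref{gamma}, namely that $\Fix_{50\delta}(H) \neq X$ for every non-trivial subgroup $H \leq H_i$. It is enough to treat the cyclic case $H = \langle h \rangle$, since $\Fix_{50\delta}(H) \subseteq \Fix_{50\delta}(h)$. I would introduce the set $K = \{g \in G : d(x,gx) \leq 50\delta \text{ for all } x \in X\}$, note that it is a subgroup, and observe that it is normal because conjugation permutes the defining inequalities. Acylindricity applied with $\epsilon = 50\delta$ at any pair $(x,y)$ whose distance exceeds the corresponding threshold $M$---such pairs exist since $X$ is non-elementary and the action is cobounded---bounds $|K|$ by the associated constant $N$. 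Hence $K$ is a finite normal subgroup, and the hypothesis on $G$ forces $K = \{1\}$, which is precisely the statement that $\Fix_{50\delta}(h) \neq X$ for every $h \neq 1$.

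With this verification in hand, Proposition \ref{gamma} supplies a loxodromic $\gamma \in G$ such that $\diam\bigl(A_{10\delta}(\gamma) \cap \Fix_{50\delta}(H_i)\bigr) \leq D_i$ for some $D_i$ and such that no non-trivial element of $\bigcup_i H_i$ preserves $\{\gamma^+,\gamma^-\}$. For each $i$ I then apply Proposition \ref{pingpong} with $G_0 = H_i$; the requirement that no element of $H_i \setminus \{1\}$ preserve $A_{10\delta}(\gamma)$ is inherited from the non-preservation of $\{\gamma^+,\gamma^-\}$, since any setwise stabilizer of the quasi-axis must preserve its pair of endpoints at infinity. Proposition \ref{pingpong} then produces a constant $C_i$ such that $\langle \gamma^N, H_i\rangle = \langle \gamma^N\rangle * H_i$ whenever the translation length $\ell(\gamma^N)$ is at least $C_i$. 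Choosing $N$ with $N \cdot \ell(\gamma) \geq \max_i C_i$ and replacing $\gamma$ by $\gamma^N$ closes the argument.

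The only step that is not a direct citation of the two preceding propositions is the normal-subgroup reduction; once $\Fix_{50\delta}(H) \neq X$ is granted, the pieces fit together with no friction, and passing from the $k$ individual constants $C_i$ to one common exponent is automatic because translation length scales linearly with powers of a loxodromic element.
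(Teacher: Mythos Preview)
Your overall architecture is the same as the paper's: check the hypothesis of Proposition~\ref{gamma}, apply it to find $\gamma$, then feed each $H_i$ into Proposition~\ref{pingpong} and take a common power. The second and third steps are correct and essentially identical to the paper's.

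The gap is in the first step. You assert that
\[
K = \{g \in G : d(x,gx) \leq 50\delta \text{ for all } x \in X\}
\]
is a subgroup, but closure under multiplication is not clear: for $g, h \in K$ the triangle inequality only gives $d(x, ghx) \leq d(x,gx)+d(gx,ghx)\leq 100\delta$, placing $gh$ in $K_{100\delta}$ rather than in $K$. Conjugation-invariance and finiteness of $K$ are genuine, but without the subgroup property you cannot invoke the ``no non-trivial finite normal subgroup'' hypothesis directly on $K$.

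The paper handles this step differently. Assuming some non-trivial elliptic $H$ has $\Fix_{50\delta}(H) = X$, acylindricity forces $H$ to be finite; since every conjugate $gHg^{-1}$ also satisfies $\Fix_{50\delta}(gHg^{-1}) = X$, all conjugates of $H$ lie in the finite set $K$, so $H$ has only finitely many conjugates and $N_G(H)$ has finite index. One then passes to a finite-index normal $G_0 \leq N_G(H)$, observes that every $G$-conjugate of $H$ is normal in $G_0$, and takes their product to obtain a non-trivial finite normal subgroup of $G$, the desired contradiction. Your shortcut can be salvaged---for instance by arguing that $\langle K\rangle$ is normal, contains no loxodromic, hence (by the classification of subgroups in acylindrical actions) has a bounded orbit, and then using normality plus coboundedness to get a uniform displacement bound making $\langle K\rangle$ finite---but as written the phrase ``note that it is a subgroup'' skips a genuine argument.
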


\begin{proof}
We first prove that      for all   elliptic subgroups $H$, ${\rm
  Fix}_{50\delta}(H) \neq X$.   
Assume that there is a subgroup $H$ which is elliptic for the action on $X$ such that ${\rm
  Fix}_{50\delta}(H) = X$.   By acylindricity of the action and non-elementarity of $X$,   $H$ must be
finite. But any conjugate $H'$ has also the property that ${\rm
  Fix}_{50\delta}(H') = X$, and acylindricity then implies that the number
of conjugates of $H$ is finite.  It follows that the normalizer $N_G(H)$ of $H$ has
finite index in $G$.  There exists a finite-index normal subgroup $G_0$ of
$G$ contained in  $N_G(H)$ (for instance the intersection
of the conjugates of $N_G(H)$).  Thus $H$ and all of its conjugates are normal in $G_0$, since they are images of $H$ by
automorphisms of $G_0$. Since there are finitely many such conjugates,
their product is still a finite normal subgroup of  $G_0$, and it is
also normal in $G$.  Hence $G$ has a
non-trivial finite normal subgroup, which is contrary to our
assumptions.

Therefore, for all   elliptic subgroups $H$, ${\rm
  Fix}_{50\delta}(H) \neq X$.   Thus by  Proposition \ref{gamma},
there exists a loxodromic element $\gamma\in G$ and a constant $D\geq 0$ such
that $A_{10\delta} (\gamma)\cap \Fix_{50\delta}(H_i)$ has diameter at most $D$ and no element of $H_i$ preserves $\{\gamma^+,\gamma^-\}$. 

Finally, it follows from Proposition \ref{pingpong} that for each $i$ there is a constant $N_i$ such that for all $N\geq N_i$, $\langle \gamma^{N},H_i\rangle\simeq\langle \gamma^{N}\rangle *  H_i$.  Setting $N=\max\{N_1,\dots, N_k\}$, the result follows.
\end{proof}

The previous theorem applies to a wide variety of situations, for
instance a collection of quasi-convex subgroups of a hyperbolic
group, a collection of relatively quasi-convex subgroups of a relatively
hyperbolic group, or a hyperbolically embedded collection of subgroups of an
acylindrically hyperbolic group,
with respect to some generating set (in the sense of \cite[Definition 4.25]{DGO}; see below for the precise definition).


We illustrate two particularly nice instances of these situations.  The first is property $P_{naive}$.

\begin{theo} \label{theo;main}  
Let $G$ be an acylindrically hyperbolic group with no non-trivial
finite normal subgroup.  Then $G$ has property $P_{naive}$.
\end{theo}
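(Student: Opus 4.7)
The strategy is to deduce the statement from Theorem \ref{theo;pingpong_with_elliptic}. Given a finite subset $\{h_1,\dots,h_k\}\subseteq G\setminus\{1\}$, the task is to produce a cobounded acylindrical action of $G$ on a hyperbolic space in which every cyclic subgroup $\langle h_i\rangle$ is elliptic. Once such an action is in place, Theorem \ref{theo;pingpong_with_elliptic} applied to the family $\{\langle h_1\rangle,\dots,\langle h_k\rangle\}$ yields an element $\gamma\neq 1$ such that $\langle\gamma, h_i\rangle\simeq \langle\gamma\rangle * \langle h_i\rangle$ for every $i$, which is exactly property $P_{naive}$.

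To produce the action I would invoke the theory of hyperbolically embedded subgroups. Call an element $g\in G$ \emph{generalized loxodromic} if it acts loxodromically on some cobounded acylindrical action of $G$ on a hyperbolic space; by a theorem of Osin \cite{O}, this is equivalent to $\langle g\rangle$ being contained in a maximal virtually cyclic subgroup $E(g)$ that is hyperbolically embedded in $G$, and any element that is not generalized loxodromic is elliptic in \emph{every} cobounded acylindrical action of $G$. For each generalized loxodromic $h_i$, form $E(h_i)$; the collection $\{E(h_i)\}$ is jointly hyperbolically embedded, and the Cayley graph of $G$ with respect to a suitable relative generating set together with $\bigcup_i E(h_i)$ is hyperbolic, the $G$-action on it is cobounded and acylindrical, and each $E(h_i)$, hence each generalized loxodromic $h_i$, has bounded orbits and is therefore elliptic. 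The remaining $h_j$ are elliptic automatically in any acylindrical action, so every $\langle h_i\rangle$ is elliptic in this new action.

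The only substantive step is the construction of the modified acylindrical action with the prescribed ellipticity pattern; this is standard machinery from \cite{DGO} and \cite{O}, so the step reduces to citing those results rather than developing new tools. I expect no further obstacle: once the action is in place, Theorem \ref{theo;pingpong_with_elliptic} is exactly tailored to produce the required $\gamma$, and property $P_{naive}$ follows at once.
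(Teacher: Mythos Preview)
Your plan is correct and follows the same overall strategy as the paper: reduce to Theorem \ref{theo;pingpong_with_elliptic} by manufacturing a cobounded acylindrical $G$--action on a hyperbolic space in which every $\langle h_i\rangle$ is elliptic. The paper carries out this step via Lemma \ref{elliptic}, which cones off the quasi-axes of a single loxodromic element using \cite[Thm.~5.39, Prop.~5.40]{DGO} and is then applied iteratively to $h_1,\dots,h_k$ (each cone-off is $G$--equivariantly Lipschitz, so elements already elliptic stay elliptic).

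Your variant, building one relative Cayley graph $\Gamma(G, Y\sqcup\bigsqcup_i E(h_i))$, is morally the same construction, but be careful with the assertion that the $E(h_i)$ are \emph{jointly} hyperbolically embedded. The results in \cite{DGO,O} that you have in mind give joint hyperbolic embedding for non-commensurable elements that are loxodromic and WPD in a \emph{common} action; the adjective ``generalized loxodromic'' only promises a possibly different action for each $h_i$, and passing from individual to joint hyperbolic embedding is exactly the kind of issue the paper flags as open after Theorem \ref{theo;pingpong_he}. The clean fix is to start from one fixed acylindrical action, take only those $h_i$ that are loxodromic \emph{there} (the others are already elliptic and need no treatment), and then either cite the joint embedding result for that common action or, equivalently, cone off their axes --- which is precisely Lemma \ref{elliptic}.
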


We first need the following lemma. In order to state it, we recall the process of coning-off a subset of a hyperbolic space. Following \cite[\S 5.3]{DGO}, given a collection  $\mathcal{Q}$  of $10\delta$-quasi-convex subsets of a hyperbolic space $X$, we define the quantity $\Delta(\mathcal{Q})$ to be the supremum over $Q_1 \neq Q_2$ in   $\mathcal{Q}$  of the diameter of $Q_1^{+20\delta} \cap Q_2^{+20\delta} $, where $Q^{+d}$ is the set of points at distance at most $d$ from $Q$.  Given a hyperbolic space with a family $\mathcal Q$ of quasi-convex subsets such that $\Delta(\mathcal{Q})$ is finite, then, after possible rescaling, one can apply the cone-off theorem \cite[Corollary 5.39]{DGO}.  This states that equivariantly gluing  hyperbolic cones of bounded diameter on each $Q\in \mathcal{Q}$ produces a hyperbolic space, which we refer to as the coned-off space.  Moreover, if the initial action is acylindrical, \cite[Prop. 5.40]{DGO} guarantees that the action on the coned-off space is acylindrical as well.

\begin{lemma} \label{elliptic}
Let $G$ be a group acting acylindrically 
 by isometries on a geodesic hyperbolic space $(X,d_X)$, and let $g_0$  be an element of $G$. Then there exists a
geodesic hyperbolic space $(Y,d_Y)$ that admits a cobounded acylindrical action of $G$ by isometries in which  $g_0$ is elliptic.

Moreover, if $G_1, \dots, G_k$  are subgroups of $G$ that are quasi-convex in $G$
with respect to the metric $d_X$, and whose set $\mathcal{Q}$ of cosets has  $\Delta(\mathcal{Q}) <\infty$ in $X$ 
 then there   exists a
geodesic hyperbolic space $(Y,d_Y)$ that admits a cobounded acylindrical action of $G$ by isometries in which each  $G_i$ is elliptic.
\end{lemma}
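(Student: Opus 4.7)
The plan is to apply the cone-off construction of Dahmani--Guirardel--Osin \cite[Corollary 5.39]{DGO} to produce $Y$ from $X$. Recall first that by Osin's classification, every element of $G$ is either elliptic or loxodromic with respect to an acylindrical action on a hyperbolic space, so for the first assertion we may assume $g_0$ is loxodromic (if $g_0$ is already elliptic, take $Y = X$). Set $A = A_{10\delta}(g_0)$, which is a $10\delta$-quasi-convex subset of $X$, and consider the $G$-invariant family $\mathcal{Q} = \{ gA : g \in G\}$. I would check that $\Delta(\mathcal{Q}) < \infty$: if two distinct translates $g_1 A$ and $g_2 A$ had $20\delta$-neighborhoods meeting in a set of diameter $L$, then along this common region both $g_1 g_0^{\pm 1} g_1^{-1}$ and $g_2 g_0^{\pm 1} g_2^{-1}$ would move two points at distance roughly $L$ apart by at most a constant depending on $\delta$ and the translation length of $g_0$. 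By acylindricity, the number of such elements is bounded independently of $L$, so only finitely many translates of $A$ can have such close contact with $A$, giving $\Delta(\mathcal{Q}) < \infty$. Then \cite[Corollary 5.39]{DGO}, after the rescaling prescribed there, yields a hyperbolic space $Y$ obtained by equivariantly gluing cones onto each $gA$, and \cite[Prop.~5.40]{DGO} guarantees that the $G$-action on $Y$ remains acylindrical. The cone point over $A$ is fixed by the setwise stabilizer of $A$ in $G$, which contains $\langle g_0\rangle$, so $g_0$ acts elliptically on $Y$. Coboundedness of the action on $Y$ is immediate because $X$ sits inside $Y$ and the cones are attached at the elements of $\mathcal{Q}$.

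For the second assertion, fix a base point $x_0 \in X$ and view the family $\mathcal{Q}$ of cosets of $G_1, \dots, G_k$ as the set of orbits $\{g G_i \cdot x_0 : g\in G,\ 1 \leq i \leq k\}$, which are quasi-convex subsets of $X$ by hypothesis. This family is $G$-invariant, consists of quasi-convex subsets, and satisfies $\Delta(\mathcal{Q}) < \infty$ by hypothesis. Applying \cite[Corollary 5.39]{DGO} and \cite[Prop.~5.40]{DGO} exactly as above produces a cobounded acylindrical action of $G$ by isometries on a hyperbolic space $Y$. For each $i$, the set $G_i \cdot x_0$ lies in $\mathcal{Q}$, so the cone over it is a single point fixed by $G_i$, and hence $G_i$ is elliptic in $Y$.

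The main obstacle is establishing finiteness of $\Delta(\mathcal{Q})$ in the first case, which requires converting a long close approach between two translates of the quasi-axis $A_{10\delta}(g_0)$ into a concrete violation of the acylindricity constants for the action of $G$ on $X$; this is the only place where the acylindricity hypothesis is used nontrivially, since in the second case the required input is handed to us by hypothesis and everything else reduces to quoting the cone-off theorem.
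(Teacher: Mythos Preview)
Your overall strategy coincides with the paper's: in both parts you invoke the cone-off construction \cite[Corollary~5.39, Proposition~5.40]{DGO} on the $G$-translates of the quasi-axis of $g_0$ (respectively, on the cosets of the $G_i$), and read off hyperbolicity, acylindricity, and ellipticity of the relevant subgroups from those results. The paper does exactly this, except that for the finiteness of $\Delta(\mathcal{Q})$ in the first part it simply cites \cite[Proposition~6.29]{DGO} rather than arguing directly.

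Your direct argument for $\Delta(\mathcal{Q})<\infty$, however, does not work as written. From a long overlap between $g_1A$ and $g_2A$ you extract the elements $g_1 g_0^{\pm1} g_1^{-1}$ and $g_2 g_0^{\pm1} g_2^{-1}$, observe that each displaces two points at distance $\approx L$ by a constant $C=C(\delta,\tau)$, and then invoke acylindricity. But acylindricity only bounds the \emph{cardinality} of the set $\{h: d(x,hx),d(y,hy)\le C\}$; since you have exhibited only four elements of this set, no contradiction arises and no bound on $L$ follows. Your subsequent conclusion ``only finitely many translates of $A$ can have such close contact with $A$'' is likewise not what is needed: even if granted, it would not bound the diameter of the overlap for a \emph{single} pair $g_1A\neq g_2A$, which is what $\Delta(\mathcal{Q})$ measures. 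The correct argument produces \emph{many} almost-fixing elements from one long overlap: after normalizing so that $g_1=1$, for each $|n|\lesssim L/\tau$ the element $g_0^{n}\,(g_2 g_0 g_2^{-1})^{-\varepsilon n}$ (with $\varepsilon\in\{\pm1\}$ chosen so the translations along the common segment cancel) moves two fixed points in the overlap by a uniform constant, and these elements are pairwise distinct because $A\neq g_2A$ forces $g_0$ and $g_2 g_0 g_2^{-1}$ to be independent. Acylindricity then bounds $L/\tau$, hence $L$. This is precisely the content of \cite[Proposition~6.29]{DGO}, which is what the paper invokes.
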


\begin{proof} Let us prove the first part of the statement. By acylindricity of the action of $G$ on $X$, $g_0$ is either elliptic or loxodromic (see for
instance \cite[Lemma 2.2]{Bo}). 
If $g_0$ is elliptic for the action on $X$, there is nothing to prove.
 Hence we assume it is
loxodromic.  Then the collection of its quasi-axes is a  collection $\mathcal{Q}_0$ of $10\delta$-quasi-convex subsets of $X$ such that $\Delta (\mathcal{Q}_0)$ is finite; this follows from  acylindricity and \cite[Proposition
6.29]{DGO}.       Thus  \cite[Theorem
5.39, Prop. 5.40]{DGO} provides a suitable space.

For the second part, we directly apply \cite[Theorem
5.39, Proposition 5.40]{DGO}. 
\end{proof}

\begin{proof}[Proof of Theorem \ref{theo;main}]
 Let $h_1,\dots, h_k\in G$ be any elements of $G$.  As $G$ is
 acylindrically hyperbolic, it acts acylindrically, coboundedly, and by isometries on a geodesic hyperbolic space
 $(X,d_X)$.  By repeatedly applying Lemma \ref{elliptic}, we may
 assume that there is a geodesic metric space $(Y,d_Y)$ with an
 isometric $G$--action that is 
 acylindrical in which $h_1,\dots,h_k$ are all elliptic.     Applying Theorem \ref{theo;pingpong_with_elliptic} completes the proof.
\end{proof}

The second instance deals with a collection of hyperbolically embedded subgroups.  We briefly recall the definition here and refer the reader to \cite{DGO} and \cite{O} for further details.

Given a collection of subgroups $\{H_1,\dots, H_n\}$ of $G$, let $\mathcal H=\sqcup_{i=1}^n H_i$.  If $X$ is relative generating set (i.e., a subset $X\subseteq G$ such that $X\cup H_1\cup\cdots\cup H_n$ generates $G$), we consider the Cayley graphs $\Gamma(G,X\sqcup\mathcal H)$ and $\Gamma(H_i,H_i)$, and we naturally think of the latter as subgraphs of the former.  For each $i$, we define a relative metric $\widehat d_i\colon H_i\times H_i\to[0,\infty]$ as follows.  For all $h,k\in H_i$, $\widehat d_i(h,k)$ is defined to be the length of the shortest path in $\Gamma(G,X\sqcup \mathcal H)$ that connects $h$ to $k$ and does not contain any edges of $\Gamma(H_i,H_i)$.  

A collection of subgroups $\{H_1,\dots, H_n\}$ of $G$ is \emph{hyperbolically embedded  in $G$ with respect to a subset $X\subseteq G$} 
if the following conditions hold.
\begin{enumerate}
\item[(a)] The group $G$ is generated by $X$ together with the union of all $H_i$, and the Cayley graph $\Gamma(G,X\sqcup\mathcal H) $ is hyperbolic.
\item[(b)] For every $i$, the metric space $(H_i,\widehat d_i)$ is proper, i.e., every ball (of finite radius) in $H_i$ with respect to the metric $\widehat d_i$ contains finitely many elements.
\end{enumerate}

If there exists a relative generating set $X$ such that $\{H_1,\dots, H_n\}$ hyperbolically embeds into $(G,X)$, then we say $\{H_1,\dots, H_n\}$ \emph{hyperbolically embeds into $G$}.

\begin{theo}\label{theo;pingpong_he}
 Let $G$ be a group with no non-trivial
finite normal subgroup, and let $\{H_1,\dots, H_n\}$ be a collection of subgroups of $G$ which hyperbolically embeds into $(G,Y)$ for some relative generating set $Y\subset G$.  
Then there exists an element $\gamma\in
G$ such that  for all $k=1,\dots,n$, $\langle \gamma, H_k\rangle = \langle
\gamma\rangle * H_k$. 

\end{theo}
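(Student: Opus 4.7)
The plan is to reduce Theorem \ref{theo;pingpong_he} to Theorem \ref{theo;pingpong_with_elliptic} by exhibiting an acylindrical, cobounded, non-elementary action of $G$ by isometries on a geodesic hyperbolic space in which every $H_k$ is elliptic. The natural candidate for such an action is the relative Cayley graph $\Gamma(G, Y' \sqcup \mathcal{H})$, where $\mathcal{H} = \bigsqcup_{k=1}^n H_k$ and $Y' \supseteq Y$ is an appropriate enlargement of the given relative generating set.

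First, I would verify the geometric properties of this action. By the definition of hyperbolic embedding, the Cayley graph $\Gamma(G, Y \sqcup \mathcal H)$ is hyperbolic; and enlarging the relative generating set $Y$ to a suitable $Y'$ preserves hyperbolic embedding of $\{H_1,\dots,H_n\}$ into $(G, Y')$. For a well-chosen $Y'$, the action of $G$ on $\Gamma(G, Y' \sqcup \mathcal H)$ is furthermore acylindrical; this is essentially the content of Theorem 5.4 of \cite{O} (see also \cite{DGO}). The action is cobounded because $G$ acts transitively on vertices, and non-elementary because $G$ is acylindrically hyperbolic.

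Second, I would observe that each $H_k$ is elliptic for this action: any element $h \in H_k$ is joined to the identity vertex $1_G$ by a single $\mathcal H$-edge, so the orbit $H_k \cdot 1_G$ has diameter at most $1$. Since $G$ has no non-trivial finite normal subgroups by hypothesis, all the conditions of Theorem \ref{theo;pingpong_with_elliptic} are satisfied, and applying it produces the desired $\gamma \in G$ with $\langle \gamma, H_k \rangle = \langle \gamma \rangle * H_k$ for every $k$.

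The only technical hurdle I foresee is the acylindricity of the action on $\Gamma(G, Y' \sqcup \mathcal H)$, which is not automatic from the definition of hyperbolic embedding (the definition guarantees only hyperbolicity of the Cayley graph and properness of the relative metrics $\widehat d_k$). This requires a careful enlargement of the relative generating set and is a known feature of hyperbolically embedded collections; once it is in place, the rest is a direct invocation of the earlier theorem.
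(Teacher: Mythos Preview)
Your proposal is correct and follows essentially the same route as the paper: both reduce to Theorem~\ref{theo;pingpong_with_elliptic} by using the relative Cayley graph $\Gamma(G, Y\sqcup\mathcal H)$, invoking \cite[Theorem~5.4]{O} for acylindricity and non-elementarity, and noting that the action is cobounded with each $H_k$ elliptic. The paper in fact applies Osin's result directly with the original $Y$ (no enlargement to $Y'$ is needed), so your caution there is harmless but unnecessary.
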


\begin{proof}
Let $\mathcal H=H_1\sqcup \cdots\sqcup H_n$. By \cite[Theorem 5.4]{O}, the action of $G$ on the Cayley graph $\Gamma(G,Y\sqcup \mathcal H)$ is nonelementary and acylindrical.  Moreover, the action is clearly cobounded, and each $H_i$ is elliptic with respect to this action.  Thus Theorem \ref{theo;pingpong_with_elliptic} establishes the result.  

\end{proof}

 It is not clear when one can replace the assumption that the collection
  of groups is  hyperbolically embedded as a collection by the weaker
  assumption that each group is hyperbolically embedded. If  one has a collection of subgroups of an
  acylindrically hyperbolic group $G$, each of which is elliptic in a non-elementary
  acylindrical action of $G$,  
   when is there an acylindrical action in which all are elliptic
   simultaneously? When can one find an element that plays ping-pong with
   each of these groups?
   
\section{Primitive group rings}

We now turn our attention to the group ring of an acylindrically hyperbolic group.  Recall that a ring $R$ is \emph{(right) primitive} if it has a faithful irreducible (right) $R$--module.

\begin{prop} \label{groupring}
Let $G$ be a countable acylindrically hyperbolic group with no non-trivial finite normal subgroup.  Then for any countable domain $R$, the group ring $RG$ is primitive.
\end{prop}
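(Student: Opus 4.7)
My plan is to adapt Solie's proof from \cite{Sol} for non-elementary torsion-free hyperbolic groups. A careful inspection of his argument shows that torsion plays no essential role; the only properties he uses are (a) countability of $G$ and $R$, (b) the existence of a non-abelian free subgroup of $G$, and (c) property $P_{naive}$. In our setting (a) is a hypothesis, (b) follows from Proposition \ref{gamma} (two independent loxodromic elements generate a free group of rank two by a standard ping-pong on $\partial X$), and (c) is Theorem \ref{theo;main}. So the framework transfers, and the task reduces to performing Solie's inductive construction in the present generality.

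Concretely, I would exhibit a faithful irreducible cyclic right $RG$-module as $M = RG/I$ for a carefully chosen proper right ideal $I$. Enumerate $G \setminus \{1\} = \{g_1, g_2, \dots\}$ and $RG \setminus \{0\} = \{\alpha_1, \alpha_2, \dots\}$, which is possible since $G$ and $R$ are countable. I then build an increasing chain of proper, finitely generated right ideals $I_0 \subset I_1 \subset \cdots \subsetneq RG$ so that at stage $n$ two requirements are met: first, the class of $\alpha_n$ modulo $I_n$ either vanishes or generates $RG/I_n$ as a right $RG$-module (irreducibility); second, there exists $m_n \in RG$ with $m_n \alpha_n \notin I_n$ (faithfulness). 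Taking $I = \bigcup_n I_n$ yields a faithful irreducible module, witnessing primitivity of $RG$.

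At the inductive step, let $F \subset G$ be the finite set of group elements occurring in the supports of $\alpha_n$ together with those of the current generators of $I_{n-1}$ and the previously chosen witnesses. Theorem \ref{theo;main} applied to $F \setminus \{1\}$ supplies an element $\gamma = \gamma_n \in G$ of infinite order with $\langle f, \gamma \rangle \cong \langle f \rangle \ast \langle \gamma \rangle$ for every $f \in F \setminus \{1\}$. The irreducibility and faithfulness clauses at stage $n$ are then satisfied by adjoining ideal generators built from suitably high powers of $\gamma_n$: an element of the form $1 - \alpha_n \beta_n$ (with $\beta_n$ a polynomial in $\gamma_n$) kills the class of $\alpha_n$ or sends it to $1 \bmod I_n$, while a witness $m_n$ built analogously prevents $\alpha_n$ from acting as $0$. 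The free-product normal form within $\langle F\rangle\ast\langle\gamma_n\rangle$ is the key structural input: it provides canonical expressions for elements of the new generators, and, by choosing the exponents of $\gamma_n$ to grow fast enough at each stage, we can arrange that the contributions from different stages have disjoint leading $\gamma$-syllables.

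The main obstacle, and the real content of Solie's argument, is the non-collapse verification — showing that no $RG$-linear combination of the accumulated generators can equal $1$, so that $I_n \subsetneq RG$ at every stage. This is precisely where $P_{naive}$ is needed in its full strength, rather than the mere existence of a free subgroup: one must form the free product of $\langle \gamma_n\rangle$ with \emph{every} element of a growing finite set simultaneously, at every stage. Granting this, the free-product normal form rules out the forbidden cancellation, the construction proceeds, and the resulting module $RG/I$ is faithful and irreducible, so $RG$ is primitive.
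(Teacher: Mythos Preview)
Your route is genuinely different from the paper's. The paper does not build a faithful simple module by hand; instead it invokes the Alexander--Nishinaka criterion (Theorem~\ref{AN}): it suffices that $G$ contain a non-abelian free subgroup of cardinality $|G|$ and satisfy property~$(*)$. The free subgroup comes from \cite[Theorem~8.7]{DGO}, and property~$(*)$ is checked directly. Given a finite $M\subset G\setminus\{1\}$, Theorem~\ref{theo;main} supplies a generalized loxodromic $u$ with $g\notin E(u)$ for all $g\in M$; then Lemma~\ref{noloops} (a geometric ``no accidental relation'' lemma for alternating words $u^{n_0}g_1u^{n_1}\cdots g_k u^{n_k}$) yields the non-vanishing in $(*)$ with $a=u^N$, $b=u^{2N}$, $c=u^{3N}$. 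That is the entire argument---the ideal-building is outsourced to \cite{AN}.

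Your direct construction has a real gap at exactly the point you flag as ``the main obstacle.'' You write that the key structural input is ``the free-product normal form within $\langle F\rangle\ast\langle\gamma_n\rangle$,'' but $P_{naive}$ does \emph{not} give you $\langle F,\gamma_n\rangle\cong\langle F\rangle\ast\langle\gamma_n\rangle$; it only gives $\langle f,\gamma_n\rangle\cong\langle f\rangle\ast\langle\gamma_n\rangle$ for each $f\in F$ separately. The finite set $F$ may well generate a subgroup containing $\gamma_n$ (indeed it may generate all of $G$), so there is no global free-product splitting to appeal to. What the non-collapse step actually needs is that specific alternating words $\gamma_n^{m_0}f_1\gamma_n^{m_1}f_2\cdots$ with $f_i\in F$ are non-trivial for large exponents---this is strictly stronger than pairwise freeness and is exactly the content of Lemma~\ref{noloops}. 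Likewise, your handling of the interaction between different stages (``disjoint leading $\gamma$-syllables'') is unclear when the $\gamma_n$ vary from stage to stage and there is no ambient free-product structure to make ``syllable'' meaningful. If you supply a statement like Lemma~\ref{noloops} and then carry out the Formanek-style construction carefully, your approach can be made to work; but at that point you have essentially reproved the relevant part of \cite{AN}, and the paper's route via property~$(*)$ is shorter.
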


Before beginning the proof of Proposition \ref{groupring}, we recall the following criterion of Alexander and Nishinaka \cite{AN}.

\begin{quote} $(*)$ For each subset $M$ of $G$ consisting of a finite number of elements not
equal to $1$, and for any positive integer $m \geq 2$, there exist distinct $a, b, c \in G$ so that if $(x^{-1}_1g_1x_1)(x^{-1}_2g_2x_2)· · ·(x^{-1}_m g_mx_m) = 1,$ where $g_i \in M$ and
$x_i \in \{a, b, c\}$ for all $i = 1, . . . , m$, then $x_i = x_{i+1}$ for some $i$.
\end{quote}

\begin{thm}\label{AN} \cite{AN}
Let $G$ be a group which has a non-abelian free
subgroup whose cardinality is the same as that of $G$, and suppose that $G$ satisfies
property $(*)$. Then if $R$ is a domain with $|R| \leq |G|$, the group ring $RG$ of $G$ over
$R$ is primitive. In particular, the group algebra $KG$ is primitive for any field $K$.
\end{thm}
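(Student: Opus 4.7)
The plan is to apply Theorem \ref{AN} of Alexander-Nishinaka. Of its three hypotheses, the cardinality conditions are essentially immediate: a countable acylindrically hyperbolic group $G$ is infinite and contains a non-abelian free subgroup (via ping-pong on two independent loxodromic elements), so it contains a free subgroup of cardinality $\aleph_0 = |G|$; and the countable domain $R$ automatically satisfies $|R| \le |G|$. Everything thus hinges on verifying property $(*)$.

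To verify $(*)$, fix a finite $M \subset G\setminus\{1\}$ and $m \ge 2$. The idea is to take $a = \gamma^N$, $b = \gamma^{2N}$, $c = \gamma^{3N}$ for a single carefully chosen loxodromic $\gamma$ and large integer $N$. Applying Theorem \ref{theo;main} (property $P_{naive}$) to the set $M$ produces a loxodromic $\gamma$ with $\langle \gamma, g\rangle = \langle \gamma\rangle * \langle g\rangle$ for each $g \in M$. Unpacking the construction — Theorem \ref{theo;pingpong_with_elliptic} via Proposition \ref{gamma} applied to the cyclic subgroups $\langle g\rangle$, each made elliptic by coning off (Lemma \ref{elliptic}) — I additionally get, in an ambient hyperbolic space $X$ on which $\gamma$ acts loxodromically, that no $g \in M$ preserves the pair $\{\gamma^+, \gamma^-\} \subset \partial X$. (Alternatively, preservation by some $g$ would place $\gamma$ and $g\gamma g^{-1}$ in a common virtually cyclic elementary subgroup by acylindricity, producing a relation forbidden by the free-product structure.)

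With this $\gamma$ and a sufficiently large $N$, consider any relation $(x_1^{-1}g_1x_1)\cdots(x_m^{-1}g_mx_m) = 1$ with $x_j \in \{a,b,c\}$, $g_j \in M$, and assume for contradiction that $x_i \neq x_{i+1}$ for every $i$. Regrouping the conjugations, writing $x_j = \gamma^{\epsilon_j N}$ with $\epsilon_j \in \{1,2,3\}$, the relation becomes an alternating word
\[
\gamma^{a_0}\, g_1\, \gamma^{a_1}\, g_2\, \gamma^{a_2} \cdots g_m\, \gamma^{a_m} = 1,
\]
where $a_0 = -\epsilon_1 N$, $a_m = \epsilon_m N$, and $a_i = (\epsilon_i - \epsilon_{i+1}) N$ for $1 \le i \le m-1$; since $x_i \neq x_{i+1}$, every $a_i$ is a nonzero integer multiple of $N$, of magnitude at least $N$. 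A classical Schottky-style ping-pong on $\partial X$ then yields a contradiction: since no $g \in M$ sends $\{\gamma^+, \gamma^-\}$ into itself, one chooses small pairwise disjoint neighborhoods $U^{\pm}$ of $\gamma^{\pm}$ and $V_g^{\pm}$ of $g\gamma^{\pm}$ for each $g \in M$; for $N$ large, $\gamma^{\pm N}$ sends $\partial X \setminus U^{\mp}$ into $U^{\pm}$. Tracking a point of $\partial X \setminus (U^+ \cup U^-)$ through the word from right to left keeps it inside these controlled neighborhoods and forces the final image into some $U^{\pm}$, contradicting the relation.

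The main obstacle is precisely this last step: $P_{naive}$ only guarantees free-product behaviour between $\gamma$ and each individual $g \in M$, whereas $(*)$ requires control over alternating words mixing different $g_i \in M$. Bridging this gap is exactly what the simultaneous non-preservation of $\{\gamma^+, \gamma^-\}$ — extracted from the second assertion of Proposition \ref{gamma} — provides, by enabling a single ping-pong argument on $\partial X$ that handles all elements of $M$ at once.
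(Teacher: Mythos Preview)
The statement you were asked to prove, Theorem~\ref{AN}, is a result of Alexander--Nishinaka quoted from \cite{AN}; the paper does not prove it and offers no argument for you to compare against. What you have actually written is a proof sketch of Proposition~\ref{groupring}, which \emph{applies} Theorem~\ref{AN} as a black box.

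Taken as a proof of Proposition~\ref{groupring}, your outline is correct and takes a genuinely different route at the key step. Both the paper and you produce a generalized loxodromic $\gamma$ via $P_{naive}$ and set $a=\gamma^{N}$, $b=\gamma^{2N}$, $c=\gamma^{3N}$; the crux is showing that an alternating word $\gamma^{a_0}g_1\gamma^{a_1}\cdots g_m\gamma^{a_m}$ with all $|a_i|\ge N$ and $g_i\in M$ is nontrivial. The paper isolates this as Lemma~\ref{noloops} and proves it using the hyperbolically-embedded machinery of \cite{DGO} (the relative metric $\widehat d$, isolated $E(u)$-components in polygons, \cite[Lemma 4.14]{DGO}). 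You instead run a Schottky ping-pong directly on $\partial X$, exploiting the second assertion of Proposition~\ref{gamma} that no $g\in M$ preserves $\{\gamma^+,\gamma^-\}$. Your route is more geometric and self-contained within the paper; the paper's route is shorter once one accepts the \cite{DGO} toolbox. One point to make explicit: ``$g$ does not preserve $\{\gamma^+,\gamma^-\}$'' a priori only says $g\{\gamma^+,\gamma^-\}\neq\{\gamma^+,\gamma^-\}$, whereas your ping-pong needs $g\{\gamma^+,\gamma^-\}\cap\{\gamma^+,\gamma^-\}=\emptyset$; this follows from acylindricity (two loxodromics sharing one boundary fixed point must share both), but deserves a sentence.
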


We will need the following lemma.  An element $g\in G$ is \emph{generalized loxodromic} if $G$ admits an acylindrical action on a hyperbolic space with respect to which $g$ is loxodromic.   Given a generalized loxodromic element $g$, we denote by $E(g)$ the unique maximal virtually cyclic subgroup containing $g$.

\begin{lem}\label{noloops}
Let $G$ be an acylindrically hyperbolic group, $u\in G$ a generalized loxodromic element, and $g_1,\dots, g_k\in G$ elements such that $g_i\not\in E(u)$ for $i=1,\dots,k$.  Then there exists a constant $N\geq 0$ such that  
\[u^{n_0}g_1u^{n_1}g_2\dots u^{n_{k-1}}g_ku^{n_k}\neq1\] 
if $|n_i|\geq N$ for all $i$.
\end{lem}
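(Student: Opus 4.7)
The plan is to exploit an acylindrical action of $G$ on a $\delta$-hyperbolic space $X$ on which $u$ is loxodromic (such an action exists by the definition of generalized loxodromic). Fix a $10\delta$-quasi-axis $A$ of $u$, a base point $x_0\in A$, and let $\tau$ denote the translation length of $u$. For the candidate word $w=u^{n_0}g_1u^{n_1}\cdots g_ku^{n_k}$, I will track the orbit of $x_0$ under the successive prefixes, producing points $z_0,z_1,\dots,z_{2k+1}$ with $z_{2k+1}=wx_0$ (explicitly, $z_{2j}=h_jx_0$ and $z_{2j-1}=h_{j-1}u^{n_{j-1}}x_0$, where $h_j=u^{n_0}g_1\cdots u^{n_{j-1}}g_j$), and show that the concatenation $\mathfrak{p}$ of geodesic segments $[z_i,z_{i+1}]$ is a global quasi-geodesic whenever each $|n_i|$ is sufficiently large. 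From this $d(x_0,wx_0)>0$, so $w\neq 1$.

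First I would extract two bounds. Set $D_0=\max_i d(x_0,g_ix_0)$. Every ``short'' segment $[z_{2j-1},z_{2j}]$ is a $G$-translate of $[x_0,g_jx_0]$ and hence has length at most $D_0$. Every ``long'' segment $[z_{2j},z_{2j+1}]$ is a $G$-translate of $[x_0,u^{n_j}x_0]$, lies $10\delta$-close to the corresponding translate of $A$, and has length at least $(|n_j|-1)\tau$. Second, for each $g_i$, the hypothesis $g_i\notin E(u)$ forces $g_i$ not to preserve $\{u^+,u^-\}$. Combined with acylindricity, this in fact prevents $g_iA$ from sharing even a single endpoint with $A$ on $\partial X$ (a shared fixed point would already imply $g_i\in E(u)$), so the closest-point projection of $g_iA$ onto $A$ has diameter bounded by a uniform constant $D'$, by the standard argument for acylindrical actions used, e.g., in \cite[Proposition 6.29]{DGO}.

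Now choose $N$ so that $N\tau$ dominates $D_0$, $D'$, and the hyperbolicity constants; the main content of the argument is then to show that $\mathfrak{p}$ is a local $(1,C)$-quasi-geodesic for a suitable $C$. At each vertex I would bound the relevant Gromov product. The delicate case — and the main obstacle — is at the two vertices connecting a short jump to neighboring long axis segments: if the Gromov product at these vertices were too large, then long portions of the consecutive quasi-axes $h_{j-1}A$ and $h_jA=h_{j-1}u^{n_{j-1}}g_jA$ would remain $O(\delta)$-close. Translating by $h_{j-1}^{-1}$ and using that $u^{n_{j-1}}$ preserves $A$ setwise, this would force $A$ and $g_jA$ to fellow-travel for distance exceeding $D'$, contradicting the bound of the previous step provided $N$ is large enough. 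This mirrors the fellow-traveling argument in the proof of Proposition \ref{pingpong}, with $g_j$ playing the role of the forbidden element that would otherwise preserve $\{u^+,u^-\}$.

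Finally, once $\mathfrak{p}$ is a local quasi-geodesic with sufficiently long segments, the standard local-to-global principle for quasi-geodesics in hyperbolic spaces (with the same constants $K_1,K_2$ invoked in the proof of Proposition \ref{pingpong}) upgrades $\mathfrak{p}$ to a global quasi-geodesic. Its length grows linearly in $\sum_j|n_j|\tau$, so the endpoints $z_0=x_0$ and $z_{2k+1}=wx_0$ are at strictly positive distance, yielding $w\neq 1$, as required.
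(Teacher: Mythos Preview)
Your argument is correct and takes a genuinely different route from the paper's proof. The paper does not work directly in the hyperbolic space; instead it invokes the hyperbolically embedded subgroup machinery of \cite{DGO}: since $u$ is generalized loxodromic, $E(u)\hookrightarrow_h (G,X)$ for a suitable relative generating set containing $u$ and the $g_i$, and then \cite[Lemma~4.14]{DGO} on isolated $E(u)$--components in geodesic polygons in $\Gamma(G,X\sqcup E(u))$ is applied. If $w=1$ one obtains a $(2k)$--gon whose $E(u)$--components are the $u^{n_i}$ sides; none can be isolated once $|n_i|$ is large, and a short induction shows that some $b_j$ must then connect to $b_{j+1}$, forcing $g_j\in E(u)$, a contradiction.

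By contrast, you stay entirely inside the acylindrical action on $X$ and recycle the quasi-geodesic template already set up in Proposition~\ref{pingpong}: bounded fellow-travel of $A$ and $g_jA$ (from $g_j\notin E(u)$ via acylindricity) replaces the ``no element of $G_0$ preserves the axis'' hypothesis there, and local-to-global finishes. This is more elementary and keeps the paper self-contained, avoiding the extra appeal to \cite[Lemmas~4.11, 4.14, Corollary~4.27]{DGO}. The paper's route, on the other hand, is insensitive to the particular geometry and packages the estimate into a single citation once the hyperbolically embedded structure is in place. Your claim that a single shared boundary point of $A$ and $g_iA$ already forces $g_i\in E(u)$ is the one step that deserves a reference or a line of justification (it is standard for acylindrical actions: two loxodromics sharing one limit point share both), but the overall strategy is sound.
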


\begin{proof}
Since $u$ is generalized loxodromic, it follows from \cite[Corollary 2.9]{DGO} that there is a subset $X\subset G$ such that  $E(u)$ hyperbolically embeds into $G$ with respect to $X$.  By \cite[Corollary 4.27]{DGO}, we may assume that the finite set $\{u,g_1,\dots, g_k\}\subseteq X$.  By a special case of \cite[Lemma 4.11(b)]{DGO}, there is a finite subset $Z\subset E(u)$ such that $d_Z$ is Lipschitz equivalent to $\widehat d$ (where multiplication and division are extended to include $\infty$ in the natural way).  As $u\in X$, it follows that $\widehat d$ (and therefore $d_Z$) is finite for every pair of elements in $E(g)$, and so $Z$ generates $E(u)$.  Since, in addition, $Z$ is finite, there exists a $K\geq 0$ such that for any $i$, we have that $i=d_{\{u\}}(1,u^i)\leq K d_Z(1,u_i)$.  Combining these facts with \cite[Lemma 4.14]{DGO} yields that there is a constant $N\geq 0$ such that for any $n\geq N$, if $u^n$ is the label of an $E(u)$--component $a$ in a $(2k)$-gon $P$, then $a$ is not isolated in $P$.

Let $n_i\geq N$ for all $i=1,\dots, k$, and suppose \[w=u^{n_0}g_1u^{n_1}g_2\dots u^{n_{k-1}}g_ku^{n_k}=1.\]  Let $P$ be a $(2k)$--gon in $\Gamma(G,E(u)\sqcup X)$ with label $w$ such that  even numbered sides $a_i$ of $P$ are edges labeled by $g_i\in X$ for $1\leq i\leq k$ and odd numbered sides $b_j$ of $P$ are geodesics labeled by powers of $u$.  By assumption, $g_i\not\in E(u)$ for any $i$, and thus the $E(u)$--components of $P$ are precisely the odd numbered sides $b_j$.  Moreover, 
since $n_i\geq N$ for all $i$, no $b_j$ can be isolated in $P$.   

If there exists some $j$ such that $b_j$ is connected to $b_{j+1}$, then there is an edge labeled by an element $e$ of $E(u)$ connecting $(b_j)_+$ to $(b_{j+1})_-$.  Therefore $g_j\in E(u)$, which contradicts our assumption on $u$.

Choose $i_1\in\{2,\dots, k\}$ minimal such that $b_1$ connects to $b_{i_1}$.  If $i_1=2$ or $k$, then we have a contradiction, as above.  Otherwise, let $e_1$ be an edge labeled by an element of $E(u)$ connecting $(b_{i_1})_-$ to $(b_1)_+$, and let $P_1$ be the polygon $e_1\cup a_1\cup b_2\cup \cdots \cup b_{i_1-1}\cup a_{i_1-1}$.  Since $P_1$ has at most $2k$ sides,  another application of \cite[Lemma 4.14]{DGO} yields that $b_2$ cannot be isolated in $P_1$.  Moreover, $b_2$ cannot connect to $e$, since this would imply that $b_2$ also connects to $b_1$, which is not possible by the discussion above.  Thus we may choose $i_2\in\{3,\dots, i_1-1\}$ minimal such that $b_2$ connects to $b_{i_2}$.  As there are only finitely many $E(u)$--components, this process has to terminate, and at the final step we either have some $b_j$ that is connected to $b_{j+1}$ or we have some $b_j$ which is isolated.  In either case we reach a contradiction, and we conclude that $w\neq 1$, as desired.

\end{proof}

The proof of Proposition \ref{groupring} now follows along the same lines as Solie's proof for non-elementary torsion-free hyperbolic groups in \cite{Sol}.

\begin{proof}[Proof of Proposition \ref{groupring}]
Let $\mathcal M$ be a finite collection of non-trivial elements of $G$.  By Theorem \ref{theo;main}, there exists an element $u\in G$ such that $\langle u,g\rangle= \langle u\rangle *\langle g\rangle$ for every $g\in\mathcal M$.  Note that by construction $u$ is a generalized loxodromic element of $G$, and the condition that $u$ and $g$ generate a free group implies that $g\not\in E(u)$ for all $g\in \mathcal M$.  

For each finite subset $A$ of $\mathcal M$, Lemma \ref{noloops} provides a constant $N(A)$.  As there are only finitely many finite subsets of $M$, let $N=\max\{N(A)\mid A\subset \mathcal M\}$.

Let $g_1,\dots, g_m\in \mathcal M$, and consider a word 
\[
w=(x_1^{_1}g_1x_1)(x_2^{-1}g_2x_2)\cdots (x_m^{-1}g_mx_m),
\]
where $x_i\in \{u^N,u^{2N},u^{3N}\}$ for each $i=1,\dots,m$.   Then 
\[w=u^{n_0}g_1u^{n_1}g_2\dots u^{n_{m-1}}g_mu^{n_m},
\]
where $u^{n_0}=x_1^{-1}$, $u^{n_i}=x^ix_{i+1}^{-1}$ for $i=1,\dots, m-1$, and $u^{n_m}=x_m$.  By construction, $n_i\in \{0,\pm N, \pm 2N\}$ for each $i=1,\dots, m-1$, and $n_1,n_m\neq 0$.  If $w=1$, then by Lemma \ref{noloops} some $n_i=0$, and it must be that $i\in \{1,\dots, m-1\}$.  Thus $x_ix_{i+1}^{-1}=1$.  Letting $a=u^N$, $b=u^{2N}$, and $c=u^{3N}$, and noting that that these are distinct elements since $u$ is an infinite order element, we conclude that $G$ satisfies property $(*)$.

Finally, $G$ has non-abelian free subgroups $F$ of the same cardinality as $G$ by \cite[Theorem 8.7]{DGO}.  Applying Theorem \ref{AN} completes the proof.

\end{proof}

\section{Boundaries}

Recall that a discrete group $G$ is said to be $C^*$--simple if its reduced $C^*$-algebra (the norm closure of the algebra of operators on $\ell^2(G)$)  is simple as a normed algebra.

By establishing property $P_{naive}$, we have thus obtained a new proof of the $C^*$-simplicity of acylindrically hyperbolic groups without non-trivial finite subgroups, by \cite[Lemmas 2.1 and 2.2]{BCdlH}.

In a recent work,  Kalantar and Kennedy  have characterized $C^*$-simplicity of a group in terms of actions on boundaries \cite{KK}. It is interesting to compare what we did with this language. Let us follow this point of view. Let $G$ be a finitely generated group, and let $B$ be a topological space endowed with an action of $G$ by homeomorphisms. We say that the action is \emph{minimal} if every $G$--orbit in $B$ is dense in $B$. We say it is \emph{proximal} if for all $x,y\in B$ there is a sequence $(t_n)$ of elements of $G$ such that $\lim t_n x = \lim t_n y$. It is \emph{strongly proximal} if the action of $G$ on $Prob(B)$, the space of probability measures on $B$, is proximal.

We say that $B$ is a \emph{$G$--boundary} if $B$ is compact 
 and the $G$--action is minimal and strongly proximal. In  \cite[Thm 1.5]{KK},  Kalantar and Kennedy   proved that $G$ is $C^*$--simple if and only if there exists a $G$--boundary on which the $G$--action is topologically free in the following sense: for all $g\in G\setminus\{1\}$ the fix point set of $g$ in $B$ has empty interior  (see also \cite[Thm 3.1]{BKKO}).

By \cite[Thm 1.5]{KK} (and \cite[Lemma 3.3]{BKKO}), we also know that if $G$ is $C^*$--simple, the Furstenberg boundary of $G$ is a $G$--boundary on which the action is free. However, it often happens that there is another (more) interesting $G$--boundary coming from the geometric situation, on which the action is not free, but is topologically free. This is the case for torsion-free  hyperbolic groups  and their Gromov boundaries.

In our situation, let $G$ be an acylindrically hyperbolic group without non-trivial finite normal subgroup, and let $X$ be a hyperbolic space with a cobounded, acylindrical $G$--action by isometries.  Let $\partial X$ be the Gromov boundary of $X$.  In general, $\partial X$ is far from being compact, hence it is not a $G$-boundary.     However, our proof of property $P_{naive}$ reflects the following.

\begin{prop} \label{prop:boundaryaction}
Let $G$ be an acylindrically hyperbolic group without non-trivial finite normal subgroup, and let $X$ be a hyperbolic space with an acylindrical cobounded $G$--action by isometries.

The action of $G$ on   $\partial X$ by homeomorphisms is minimal, strongly proximal, and topologically free.
\end{prop}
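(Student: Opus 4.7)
The plan is to verify minimality, strong proximality, and topological freeness of the $G$-action on $\partial X$ separately, drawing throughout on the machinery of Section~1: the density of loxodromic fixed points in $\partial X$ (a consequence of coboundedness and non-elementarity), the north-south dynamics furnished by Lemma~\ref{LOA}, and the escape statement of Proposition~\ref{gamma}. Write $\Fix_{\partial X}(g)=\{\xi\in\partial X:g\xi=\xi\}$. Coboundedness and the no-finite-normal-subgroup hypothesis will enter exactly as in the proof of Theorem~\ref{theo;pingpong_with_elliptic}.

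For minimality, given $\xi \in \partial X$ and a non-empty open $V \subseteq \partial X$, coboundedness implies loxodromic fixed points are dense in $\partial X$, so one picks a loxodromic $h_0$ with $h_0^+ \in V$; a further application of Lemma~\ref{LOA} perturbs $h_0$ to a loxodromic $h$ with $h^+ \in V$ and $h^- \ne \xi$, and then $h^n \xi \to h^+$ places a translate of $\xi$ in $V$. For strong proximality, given $\mu \in \mathrm{Prob}(\partial X)$, the atoms of $\mu$ form a countable set while the loxodromic fixed points form an uncountable set, so one finds a loxodromic $h$ with $h^-$ not an atom of $\mu$; north-south dynamics then yields $h^n \mu \to \delta_{h^+}$, so the orbit closure of every probability measure contains a Dirac. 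Applying this fact to the averaged measure $\tfrac12(\mu+\nu)$ forces a common sequence along which both $\mu$ and $\nu$ converge to the same Dirac, which is precisely strong proximality.

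The main obstacle is topological freeness. Suppose, for contradiction, that some $g \in G \setminus \{1\}$ fixes an open set $U \subseteq \partial X$ pointwise. A loxodromic element has exactly two boundary fixed points, so $g$ must be elliptic; moreover $g$ has finite order, because if $g$ had infinite order, a fixed boundary point $\xi$ would produce a ray $r$ to $\xi$, an unbounded portion of which lies in $\Fix_K(\langle g\rangle)$ for some $K$ (by the standard estimate on asymptotic rays), contradicting acylindricity applied to the infinite subgroup $\langle g\rangle$. The same estimate gives, for a suitable $K$,
\[
\Fix_{\partial X}(g) \;\subseteq\; \overline{\Fix_K(\langle g\rangle)}\cap \partial X.
\]
I would then mimic the closing paragraph of Proposition~\ref{gamma}: use coboundedness together with Lemma~\ref{LOA} to find a loxodromic $\alpha \in G$ with both $\alpha^{\pm} \in U$; choose $N$ so large that $\alpha^N(\partial X\setminus U)\subseteq U$, so that $\partial X = U \cup \alpha^{-N}(U)$. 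Since $g$ fixes $U$ pointwise and $g^{\alpha^{-N}}=\alpha^{-N}g\alpha^N$ fixes $\alpha^{-N}(U)$ pointwise,
\[
\partial X \;=\; \Fix_{\partial X}(g) \cup \Fix_{\partial X}(g^{\alpha^{-N}}) \;\subseteq\; \overline{\Fix_K(\langle g\rangle)} \cup \overline{\Fix_K(\langle g^{\alpha^{-N}}\rangle)}.
\]
The opening argument of Theorem~\ref{theo;pingpong_with_elliptic} (which uses the no-finite-normal-subgroup hypothesis) gives $\Fix_K(\langle g\rangle) \ne X$, so Proposition~\ref{gamma} applied to the pair of finite elliptic subgroups $\{\langle g\rangle, \langle g^{\alpha^{-N}}\rangle\}$ produces a loxodromic whose attracting fixed point avoids both closures, contradicting the displayed covering. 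The one delicate technical point will be aligning the constant $K$ above with the specific $50\delta$ fixed in Section~1, which can be resolved either by checking that Lemma~\ref{lem:axis} and Proposition~\ref{gamma} continue to hold with $50\delta$ replaced by any sufficiently large absorption constant (the value $50\delta$ serves only as a threshold in the quasi-convexity and acylindricity estimates), or by rescaling the metric on $X$ so that $50\delta \geq K$.
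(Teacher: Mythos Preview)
Your overall architecture is sound and close in spirit to the paper's, but there are two genuine gaps.

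\textbf{Strong proximality.} The claim that ``the loxodromic fixed points form an uncountable set'' is false whenever $G$ is countable: then there are only countably many loxodromics, hence countably many fixed points, and nothing prevents every one of them from being an atom of $\mu$. What is uncountable is $\partial X$ itself (it is perfect, since the action is non-elementary and cobounded), so $\mu$ has a non-atom $\xi_0\in\partial X$. You cannot in general place a loxodromic repelling point \emph{exactly} at $\xi_0$, but by density you can place one in any prescribed neighborhood of $\xi_0$; this is precisely what the paper does. Choosing neighborhoods $U_n\ni\xi_0$ with $\mu(U_n)<1/n$ and loxodromics with repelling point in $U_n$ (manufactured via Lemma~\ref{LOA}), large powers push all but $1/n$ of the mass into any target neighborhood. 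Your averaging trick with $\tfrac12(\mu+\nu)$ then goes through unchanged.

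\textbf{Topological freeness.} Your reduction to a covering $\partial X=\overline{\Fix_K(\langle g\rangle)}\cup\overline{\Fix_K(\langle g^{\alpha^{-N}}\rangle)}$ and an appeal to Proposition~\ref{gamma} is correct in outline, and is indeed the argument at the end of the proof of Proposition~\ref{gamma}. However, your proposed rescaling fix for the mismatch between $K$ and $50\delta$ does not work: the ratio $K/\delta$ is scale-invariant, since both $K$ (which involves $d(x_0,gx_0)$) and $\delta$ scale linearly with the metric. Your other fix---rerunning Lemma~\ref{lem:axis} and Proposition~\ref{gamma} with $50\delta$ replaced by an arbitrary $K\ge 50\delta$---is the right one; the proofs go through verbatim, and the opening paragraph of the proof of Theorem~\ref{theo;pingpong_with_elliptic} likewise yields $\Fix_K(\langle g\rangle)\neq X$ for any fixed $K$.

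For comparison, the paper's own proof of topological freeness avoids the constant-matching issue by arguing directly on $\partial X$: it places a loxodromic $\gamma_0$ with both ends in $U$, uses Proposition~\ref{gamma} to find $\gamma_1$ whose ends are moved by $g$, and then uses Lemma~\ref{LOA} to produce loxodromics $\xi_i$ with $\xi_i^+\to\gamma_0^+\in U$ and $\xi_i^-\to\gamma_1^-\notin\Fix_{\partial X}(g)$, invoking Lemma~\ref{lem:axis} to force $\xi_i^+\notin\Fix_{\partial X}(g)$ and obtain a contradiction. Your route via the covering of $\partial X$ is equally valid once the constant is handled, and arguably more transparent.
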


\begin{proof}
Let $x_0\in X$. The shadow of a set $S\subset X$ is the subset of $\partial X$ consisting of points $\xi$ for which there exists $s\in S$ such that $( x_0 \cdot  \xi)_s \leq 5\delta$.   

Consider  two probability measures $\mu_1, \mu_2$ on $\partial X$. 
Since they have finite mass and there are uncountably many points in $\partial X$, there exists $\xi_0\in\partial X$ such that the following holds: for all $n$, there exists a neighborhood $U_n$ of $\xi_0$, for which $\mu_1(U_n) +\mu_2(U_n) <1/n$.  Indeed, this is easily seen by considering the negation of the statement.  We may take $(U_n)$ as a basis of neighborhoods of $\xi_0$.

On the other hand, take also $\zeta \in \partial X$ for which $(\zeta \cdot  \xi_0)_{x_0} \leq 5\delta$, and let $V_n$ be a basis of neighborhoods of $\zeta$.

By the coboundedness of the action, for each $n$ there exists $\gamma_n \in G$  loxodromic on $X$, with repelling fixed point in $U_n$ and there exists $\gamma'_n\in G$   loxodromic on $X$    with attracting fixed point in $V_n$.  Lemma \ref{LOA} provides constants $k,m$ such that $\gamma_n''={\gamma'_n}^m{\gamma_n}^k$ is a loxodromic element with repelling fixed point in $U_n$ and attracting fixed point in $V_n$.  

Fix $n$, and choose $m>n$  such that $C_n= \partial X\setminus U_n$ is outside $U_m$. 
We apply a large $M$-th power of $\gamma''_m$ so that  $(\gamma''_m)^MC_n$ is inside $V_m$. It follows that  $(\gamma''_m)^M \mu_i (V_m) \geq 1-\frac{1}{n}$. Letting $n$ go to infinity ensures that    $(\gamma''_m)^M \mu_i$ converges to the Dirac mass at $\zeta$.

This proves  minimality (taking $\mu_1 =\mu_2$ a Dirac mass) and strong proximality of the action of $G$ on $\partial X$.

Let us next prove topological freeness. First, if $\gamma$ is of infinite order in $G$, then its fixed point set is either a pair of points (if it is loxodromic), or empty (if it is not) by acylindricity (see \cite[Lemma 2.2]{Bo}).

If $\gamma$ has finite order and is non-trivial, assume by contradiction that $\zeta \in {\rm Fix}(\gamma)$ has an open neighborhood $U_0$ contained in in   ${\rm Fix}(\gamma)$. Then, by minimality, $U_0$ contains a fixed point of some loxodromic element $\gamma_0$ in $G$. By Lemma \ref{lem:axis} $U_0$ must then contain both fixed points at infinity   of $\gamma_0$, which we denote by  $\gamma_0^-, \gamma_0^+$.   By Proposition \ref{gamma} there also exists $\gamma_1 \in G$ which is loxodromic on $X$ such that $\gamma_1^+,\gamma_1^-\not\in  {\rm Fix}(\gamma)$.  
Considering $\gamma_0^n \gamma_1^m$ as in Lemma \ref{LOA}, we can extract  a sequence of elements $\xi_i\in G$ which are loxodromic on $X$  such that $\xi_i^+$ converges to $\gamma_0^+$ and  $\xi_i^-$  converges to $\gamma_1^-$. 
 It follows that eventually  $\xi_i^-$ is not in  ${\rm Fix}(\gamma)$, and therefore, by Lemma \ref{lem:axis}, neither is $\xi_i^+$.  However, $\xi_i^+$ must enter $U_0$ at some point, and we have a contradiction.


 \end{proof}

\end{document}